    \setlist{nosep,leftmargin=*}
\theoremstyle{plain}
\newtheorem{Satz}{Theorem}
\newtheorem{Aussage}[Satz]{Proposition}
\newtheorem{Korollar}[Satz]{Corollary}
\newtheorem{Hilfssatz}{Lemma}
\theoremstyle{definition}
\newtheorem{Definition}{Definition}
\theoremstyle{remark}
\newtheorem{Bemerkung}{Remark}
\let\ForAll\forall
\renewcommand\forall{\ForAll\,}
\let\Exists\exists
\renewcommand\exists{\Exists\,}
\newcommand{\N}{\mathbb N}
\newcommand{\R}{\mathbb R}
\newcommand{\A}{\mathcal A}
\newcommand{\cC}{\mathcal C}
\newcommand{\cD}{\mathcal D}
\newcommand{\cH}{\mathcal H}
\newcommand{\cM}{\mathcal M}
\newcommand{\cS}{\mathcal S}
\newcommand{\cU}{\mathcal U}
\newcommand{\cX}{\mathcal X}
\newcommand{\cZ}{\mathcal Z}
\newcommand{\id}{\mathrm{id}}
\newcommand{\PD}{\mathcal{PD}}
\newcommand{\K}{\mathcal K }
\newcommand{\Kinf}{\mathcal K_\infty}
\newcommand{\Lfcn}{\mathcal L}
\newcommand{\KL}{\mathcal{KL}}
\newcommand{\KLL}{\mathcal{KLL}}
\newcommand{\Iff}{\Leftrightarrow}
\DeclareMathOperator*{\esssup}{\mathrm{ess\,sup}}
\DeclareMathOperator*{\argmax}{\mathrm{argmax}}
\let\limsup\relax
\DeclareMathOperator*{\limsup}{\overline{\lim}}
\DeclareMathOperator*{\interior}{\mathrm{int}}
\DeclareMathOperator{\dom}{\mathrm{dom}}
\DeclareMathOperator{\doms}{\mbox{\scriptsize \ensuremath{\mathrm{dom}}}}
\begin{document}
\begin{frontmatter}

\title{Lyapunov small-gain theorems for networks of\\
not necessarily ISS hybrid systems\thanksref{footnoteinfo}}

\thanks[footnoteinfo]{This paper was not presented at any IFAC meeting. Corresponding author A. Mironchenko. Tel. +49-851-509-3363.}

\author[Passau]{Andrii Mironchenko}\ead{andrii.mironchenko@uni-passau.de},
\author[UCSB]{Guosong Yang}\ead{guosongyang@ucsb.edu},
\author[UIUC]{Daniel Liberzon}\ead{liberzon@illinois.edu}

\address[Passau]{Faculty of Computer Science and Mathematics, University of Passau, Innstra\ss e 33, 94032 Passau, Germany}
\address[UCSB]{Department of Electrical and Computer Engineering, University of California, Santa Barbara, CA 93106 U.S.A.}
\address[UIUC]{Coordinated Science Laboratory, University of Illinois at Urbana-Champaign, 1308 W. Main St., Urbana, IL 61801 U.S.A.}

\begin{keyword}
hybrid systems, input-to-state stability, small-gain theorems.
\end{keyword}                             %

\begin{abstract}
We prove a novel Lyapunov-based small-gain theorem for networks composed of $ n \geq 2 $ hybrid subsystems which are not necessarily input-to-state stable. This result unifies and extends several small-gain theorems for hybrid and impulsive systems proposed in the last few years. We also show how average dwell-time (ADT) clocks and reverse ADT clocks can be used to modify the ISS Lyapunov functions for subsystems and to enlarge the applicability of the derived small-gain theorems.
\end{abstract}
\end{frontmatter}

\setlength{\abovedisplayskip}{6pt}  
\setlength{\belowdisplayskip}{6pt}  



\section{Introduction}\label{sec:intro}
The study of interconnections plays a significant role in the system theory, as it allows one to establish stability for a complex system based on properties of its less complex components. In this context, small-gain theorems prove to be useful and general in analyzing feedback interconnections, which are ubiquitous in the control literature. An overview of classical small-gain theorems involving input-output gains of linear systems can be found in \cite{DesoerVidyasagar2009}. In \cite{Hill1991,MareelsHill1992}, the small-gain technique was extended to nonlinear feedback systems within the input-output context. The next peak in the stability analysis of interconnections was reached based on the input-to-state stability (ISS) framework proposed in \cite{Sontag1989}, which unified the notions of internal and external stability. Nonlinear small-gain theorems for general feedback interconnections of two ISS systems were introduced in \cite{JiangTeelPraly1994,JiangMareelsWang1996}. Their generalization to networks composed of $ n \geq 2 $ ISS systems were reported in \cite{DashkovskiyRufferWirth2007,DashkovskiyRufferWirth2010}, with several variations summarized in \cite{DashkovskiyEfimovSontag2011}.

The results described above have been developed for continuous-time systems (i.e., ordinary differential equations). In the discrete-time context, small-gain theorems for general feedback interconnections of two ISS systems were established in \cite{JiangWang2001,LailaNesic2003}, and their generalization to networks composed of $ n \geq 2 $ ISS systems can be found in \cite{LiuJiangHill2012}. However, in modeling real-world phenomena one often has to consider interactions between continuous and discrete dynamics. A general framework for modeling such behaviors is the hybrid systems theory \cite{HaddadChellaboinaNersesov2006,GoebelSanfeliceTeel2012}. In this work, we adopt the hybrid system model in \cite{GoebelSanfeliceTeel2012}, which proves to be natural and general from the viewpoint of Lyapunov stability theory \cite{CaiTeelGoebel2007,CaiTeelGoebel2008}. The notions of input-to-state stability and ISS Lyapunov functions were extended for this class of hybrid systems in \cite{CaiTeel2009}.

Due to their interactive nature, many hybrid systems can be inherently modeled as feedback interconnections \cite[Section~V]{LiberzonNesicTeel2014}. During recent years, great efforts have been devoted to the development of small-gain theorems for interconnected hybrid systems. Trajectory-based small-gain theorems for interconnections of two hybrid systems were reported in \cite{NesicLiberzon2005,KarafyllisJiang2007,DashkovskiyKosmykov2013}, while Lyapunov-based formulations were proposed in \cite{LiberzonNesic2006,NesicTeel2008,LiberzonNesicTeel2014}. Some of these results were extended to networks composed of $ n \geq 2 $ ISS hybrid systems in \cite{DashkovskiyKosmykov2013}.

A more challenging problem is the study of hybrid systems in which either the continuous or the discrete dynamics is destabilizing (non-ISS). In this case, input-to-state stability is usually achieved under restrictions on the frequency of discrete events, such as dwell-time \cite{Morse1996}, average dwell-time (ADT) \cite{HespanhaMorse1999} and reverse average dwell-time (RADT) \cite{HespanhaLiberzonTeel2008}. For interconnections of such hybrid subsystems, the small-gain theorems established in \cite{DashkovskiyKosmykov2013,LiberzonNesicTeel2014} cannot be applied directly. The results of \cite{LiberzonNesicTeel2014} show that one can modify the non-ISS dynamics in subsystems by first adding auxiliary clocks and then constructing ISS Lyapunov functions for the augmented subsystems that decrease both during flow and at jumps. One advantage of this method is that it can be applied even if the non-ISS dynamics are of different types (i.e., if in some subsystems the continuous dynamics are non-ISS, and in some other ones the discrete dynamics are non-ISS). However, such modifications will lead to enlarged Lyapunov gains of subsystems, and hence make the small-gain condition more restrictive.

Another type of small-gain theorems was proposed in \cite{DashkovskiyKosmykovMironchenkoNaujok2012,DashkovskiyMironchenko2013SICON} for interconnected impulsive systems with continuous or discrete non-ISS dynamics. The first step in this method is to construct a candidate exponential ISS Lyapunov function for the interconnection. Provided that the non-ISS dynamics of subsystems are of the same type (i.e., when either the continuous dynamics of all subsystems or the discrete dynamics of all subsystems are ISS), the candidate exponential ISS Lyapunov function can be used to establish input-to-state stability of the interconnection under suitable ADT/RADT conditions. Compared with the previous method, this one doesn't require modifications of subsystems, and hence preserves the Lyapunov gains and validity of small-gain conditions. However, this method has been developed only for impulsive systems and requires candidate exponential ISS Lyapunov functions for subsystems. Moreover, it cannot be applied to interconnections of subsystems with different types of non-ISS dynamics.

In this paper, we unify the two methods above. In Section~\ref{sec:pre}, we introduce the modeling framework and main definitions, followed by a Lyapunov-based sufficient condition for ISS of hybrid systems with continuous or discrete non-ISS dynamics. In Section~\ref{sec:smg}, we establish a general small-gain theorem for an interconnection of $ n \geq 2 $ hybrid subsystems by constructing a candidate ISS Lyapunov function for the interconnection, which generalizes the Lyapunov-based small-gain theorems from \cite{NesicTeel2008,DashkovskiyKosmykovMironchenkoNaujok2012,DashkovskiyKosmykov2013,DashkovskiyMironchenko2013SICON,LiberzonNesicTeel2014}. We also derive several implications of the general result, in particular, a small-gain theorem for interconnections of subsystems with the same type of non-ISS dynamics and also candidate exponential ISS Lyapunov functions with linear Lyapunov gains. In Section~\ref{sec:aug}, we propose a version of the approach of modifying ISS Lyapunov functions for subsystems from \cite{LiberzonNesicTeel2014}, in which fewer subsystems are affected (and hence fewer Lyapunov gains are enlarged). In Section~\ref{sec:sum}, we summarize the results of this work as a unified method for establishing ISS of interconnections of hybrid subsystems and conclude the paper with an outlook on future research.

A preliminary and shortened version of the paper has been presented at the 21st International Symposium on Mathematical Theory of Networks and Systems \cite{MironchenkoYangLiberzon2014}.

\section{Framework for hybrid systems}\label{sec:pre}
Let $ \R_+ := [0, \infty) $ and $ \N := \{0, 1, 2, \ldots\} $. For a vector $ x \in \R^N $, denote by $ |x| $ its Euclidean norm, and by $ |x|_\A := \inf_{y \in \A} |x - y| $ its Euclidean distance to a set $ \A \subset \R^N $. For $ n $ vectors $ x_1, \ldots, x_n $, denote by $ (x_1, \ldots, x_n) := (x_1^\top, \ldots, x_n^\top)^\top $ their concatenation. For two vectors $ x, y \in \R^n $, we say that $ x \geq y $ and $ x > y $ if the corresponding inequality holds in all scalar components, and that $ x \ngeq y $ if there is at least one scalar component $ i $ in which $ x_i < y_i $. For a set $ \A $, denote by $ \overline\A $ and $ \interior\A $ its closure and interior, respectively.

Denote by $ \id $ the identity function. A function $ \alpha: \R_+ \to \R_+ $ is of class $ \PD $ if it is continuous and positive-definite (i.e., $ \alpha(r) = 0 \Iff r = 0 $); it is of class $ \K $ if $ \alpha \in \PD $ and is strictly increasing; it is of class $ \Kinf $ if $ \alpha \in K $ and is unbounded. A function $ \gamma: \R_+ \to \R_+ $ is of class $ \Lfcn $ if it is continuous, strictly decreasing and $ \lim_{t \to \infty} \gamma(t) = 0 $. A function $ \beta: \R_+ \times \R_+ \to \R_+ $ is of class $ \KL $ if $ \beta(\cdot, t) \in \K $ for each fixed $ t $ and $ \beta(r, \cdot) \in \Lfcn $ for each fixed $ r > 0 $.

Motivated by \cite{CaiTeel2009}, a hybrid system is modeled as the combination of a continuous flow and discrete jumps
\begin{equation}\label{eq:hyd}
\begin{aligned}
  &\dot x \in F(x,u), &\qquad &(x, u) \in \cC, \\
  &x^+ \in G(x,u), &\qquad &(x, u) \in \cD,
\end{aligned}
\end{equation}
where $ x \in \cX \subset \R^N $ is the state, $ u \in \cU \subset \R^M $ is the input, $ \cC \subset \cX \times \cU $ is the flow set, $ \cD \subset \cX \times \cU $ is the jump set, $ F: \cC \rightrightarrows \R^N $ is the flow map (here by $ \rightrightarrows $ we mean that $ F $ is a set-valued function, which maps each element of $ \cC $ to a subset of $ \R^N $), and $ G: \cD \rightrightarrows \cX $ is the jump map. (In this model, the dynamics of \eqref{eq:hyd} is continuous in $ \cC\backslash\cD $ and discrete in $ \cD\backslash\cC $. In $ \cC \cap \cD $, it can be either continuous or discrete.) The hybrid system \eqref{eq:hyd} is fully characterized by its \emph{data} $ \cH := (F, G, \cC, \cD, \cX, \cU) $.

Solutions of \eqref{eq:hyd} are defined on hybrid time domains. A set $ E \subset \R_+ \times \N $ is called a \emph{compact hybrid time domain} if $ E = \bigcup_{j=0}^{J} ([t_j, t_{j+1}], j) $ for some finite sequence of times $ 0 = t_0 \leq t_1 \leq \cdots \leq t_{J+1} $. It is a \emph{hybrid time domain} if $ E \cap ([0, T] \times \{0, 1, \ldots, J\}) $ is a compact hybrid time domain for each $ (T, J) \in E $. On a hybrid time domain, there is a natural ordering of points, that is, $ (s, k) \preceq (t, j) $ if $ s + k \leq t + j $, and $ (s, k) \prec (t, j) $ if $ s + k < t + j $.

Functions defined on hybrid time domains are called \emph{hybrid signals}. A hybrid signal $ x: \dom x \to \cX $ (defined on the hybrid time domain $ \dom x $) is a \emph{hybrid arc} if $ x(\cdot, j) $ is locally absolutely continuous for each $ j $. A hybrid signal $ u: \dom u \to \cU $ is a \emph{hybrid input} if $ u(\cdot, j) $ is Lebesgue measurable and locally essentially bounded for each $ j $. A hybrid arc $ x: \dom x \to \cX $ and a hybrid input $ u: \dom u \to \cU $ form a \emph{solution pair} $ (x, u) $ of \eqref{eq:hyd} if
\begin{itemize}
    \item $ \dom x = \dom u $ and $ (x(0, 0), u(0, 0)) \in \overline\cC \cup \cD $, where $ x(t, j) $ denotes the state of the hybrid system at hybrid time $ (t, j) $, that is, at time $ t $ and after $ j $ jumps;
    \item for each $ j \in \N $, it holds that $ (x(t, j), u(t, j)) \in \cC $ for all $ t \in \interior I_j $ and $ \dot x(t, j) \in F(x(t, j), u(t, j)) $ for almost all $ t \in I_j $, where $ I_j := \{t: (t, j) \in \dom x\} $;
    \item for each $ (t, j) \in \dom x $ such that $ (t, j+1) \in \dom x $, it holds that $ (x(t, j), u(t, j)) \in \cD $ and $ x(t, j+1) \in G(x(t, j), u(t, j)) $.
\end{itemize}
With proper assumptions on the data $ \cH $, one can establish local existence of solutions, which are not necessarily unique (see, e.g., \cite[Proposition~2.10]{GoebelSanfeliceTeel2012}). A solution pair $ (x, u) $ is \emph{maximal} if it cannot be extended, and \emph{complete} if $ \dom x $ is unbounded. In this paper, we only consider maximal solution pairs.

Following \cite{CaiTeel2009}, the essential supremum norm of a hybrid signal $ u $ up to a hybrid time $ (t, j) $ is defined by
\begin{equation*}
    \|u\|_{(t, j)} := \max \!\bigg\{ \!\esssup\limits_{\substack{(s, k) \in \doms u, \\ (s, k) \preceq (t, j)}}\! |u(s, k)|,\, \!\!\sup\limits_{\substack{(s, k) \in J(u), \\ (s, k) \preceq (t, j)}}\! |u(s, k)| \!\bigg\},
\end{equation*}
where $ J(x) := \{(s, k) \in \dom u : (s, k+1) \in \dom u\} $ is the set of jump times. In particular, the set of measure $ 0 $ of hybrid times that are ignored in computing the essential supremum norm cannot contain any jump time.

For a locally Lipschitz function $ V: \R^n \to \R $, its \emph{Dini derivative} at $ x \in \R^n $ in the direction $ y \in \R^n $ is given by
\begin{equation*}
  \dot V(x; y) := \limsup_{h \searrow 0} \dfrac{V(x + hy) - V(x)}{h},
\end{equation*}
where $ \limsup $ denotes the limit superior.

In this paper, we study input-to-state stability (ISS) properties of the hybrid system \eqref{eq:hyd} using ISS Lyapunov functions. Let $ \A \subset \cX $ be a compact set.
\begin{Definition}\label{dfn:hyd-iss}
Following \cite{LiberzonNesicTeel2014}, we say that a set of solution pairs $ \cS $ of \eqref{eq:hyd} is \emph{pre-input-to-state stable (pre-ISS) w.r.t. $ \A $} if there exist $ \beta \in \KL $ and $ \gamma \in \K $ such that for all $ (x, u) \in \cS $,
\begin{equation}\label{eq:hyd-iss-dfn}
    |x(t, j)|_\A \leq \max\{\beta(|x(0, 0)|_\A, t + j),\, \gamma(\|u\|_{(t, j)})\}
\end{equation}
for all $ (t, j) \in \dom x $. If $ \cS $ contains all solution pairs of \eqref{eq:hyd}, then we say that \eqref{eq:hyd} is \emph{pre-ISS w.r.t. $ \A $}. In addition, if all solution pairs are complete then we say that \eqref{eq:hyd} is \emph{ISS w.r.t. $ \A $}.
\end{Definition}
\begin{Bemerkung}\label{rmk:hyd-iss-gas}
If \eqref{eq:hyd-iss-dfn} holds with $ \gamma \equiv 0 $, then the set $ \cS $ is \emph{globally pre-asymptotically stable (pre-GAS)}, which implies that all complete solution pairs in $ \cS $ converge to $ \A $. In addition, if all solution pairs in $ \cS $ are complete then it is \emph{globally asymptotically stable (GAS)} \cite{LiberzonNesicTeel2014}.
\end{Bemerkung}
\begin{Bemerkung}\label{rmk:hyd-iss-kll}
In \cite{CaiTeel2009}, ISS of hybrid systems is defined in terms of class $ \KLL $ functions and without requiring all solution pairs to be complete, which is equivalent to our definition of pre-ISS with $ \KL $ functions \cite[Lemma~6.1]{CaiTeelGoebel2007}.
\end{Bemerkung}

\begin{Definition}\label{dfn:hyd-lya}
For the hybrid system \eqref{eq:hyd}, a function $ V: \cX \to \R_+ $ is a \emph{candidate ISS Lyapunov function w.r.t. $ \A $} if it is locally Lipschitz outside $ \A $,\footnote{The Lipschitz condition here is used to ensure the existence of the Dini derivative in \eqref{eq:hyd-lya-dfn-flow}, and it can be relaxed to that the function $ V $ is locally Lipschitz on an open set containing all $ x \notin \A $ such that $ (x, u) \in \cC $ for some $ u \in \cU $.\label{ftnt:lipschitz}} and
\begin{enumerate}[1.]
    \item there exist functions $ \psi_1, \psi_2 \in \Kinf $ such that
        \begin{equation}\label{eq:hyd-lya-dfn-bnd}
            \psi_1(|x|_\A) \leq V(x) \leq \psi_2(|x|_\A) \qquad \forall x \in \cX;
        \end{equation}
    \item there exist a gain function $ \chi \in \K $ and a continuous function $ \varphi: \R_+ \to \R $ with $ \varphi(0) = 0 $ such that for all $ (x, u) \in \cC $ with $ x \notin \A $,
        \begin{align}
            \hspace{-7mm} V(x) {\geq} \chi(|u|) \implies \dot V(x; y) {\leq} {-} \varphi(V(x)), \ y \in F(x, u); \label{eq:hyd-lya-dfn-flow}
        \end{align}
    \item there is a function $ \alpha \in \K $ such that for all $ (x, u) \in \cD $,\footnote{There is no loss of generality in requiring $ \alpha \in \K $ instead of $ \alpha \in \PD $, as a class $ \PD $ function can always be majorized by a class $ \K $ one. Meanwhile, $ \alpha \in \K $ is needed in establishing the small-gain theorems below, as explained in footnote~\ref{ftnt:jump-rate}.\label{ftnt:jump-rate-dfn}}
        \begin{align}
          \hspace{-7mm} V(x) \geq \chi(|u|) \implies V(y) \leq \alpha(V(x)),\ y \in G(x, u). \label{eq:hyd-lya-dfn-jump}
        \end{align}		
\end{enumerate}
In addition, if there exist two constants $ c, d \in \R $ so that
\begin{equation}\label{eq:hyd-lya-dfn-rate-coef}
    \varphi(r) \equiv c r, \quad \alpha(r) \equiv e^{-d} r
\end{equation}
in \eqref{eq:hyd-lya-dfn-flow} and \eqref{eq:hyd-lya-dfn-jump}, then $ V $ is a \emph{candidate exponential ISS Lyapunov function w.r.t. $ \A $} with \emph{rate coefficients} $ c, d $.
\end{Definition}

The next lemma gives an alternative characterization of the candidate ISS Lyapunov function, which will be useful in formulating the small-gain theorems in Section~\ref{sec:smg}.
\begin{Hilfssatz}\label{lem:hyd-lya-alt}
For the hybrid system \eqref{eq:hyd}, a function $ V: \cX \to \R_+ $ is a candidate ISS Lyapunov function w.r.t. $ \A $ if and only if it is locally Lipschitz outside $ \A $, and
\begin{enumerate}[1.]
    \item there exist functions $ \psi_1, \psi_2 \in \Kinf $ such that \eqref{eq:hyd-lya-dfn-bnd} holds;
    \item there exist a gain function $ \bar\chi \in \K $ and a continuous function $ \varphi: \R_+ \to \R $ with $ \varphi(0) = 0 $ such that for all $ (x, u) \in \cC $ with $ x \notin \A $,
		\begin{align}
            \hspace{-8mm} V(x) {\geq} \bar\chi(|u|) \implies \dot V(x; y) {\leq} {-}\varphi(V(x)),\  y \in F(x, u); \label{eq:hyd-lya-alt-flow}
        \end{align}
    \item there is a function $ \alpha \in \K $ such that for all $ (x, u) \in \cD $,
        \begin{equation}\label{eq:hyd-lya-alt-jump}
            V(y) \leq \max\{\alpha(V(x)),\, \bar\chi(|u|)\} \quad \forall y \in G(x, u).
        \end{equation}
\end{enumerate}
\end{Hilfssatz}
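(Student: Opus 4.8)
The plan is to prove the two implications separately, noting first that the requirement ``locally Lipschitz outside $\A$'', item~1 (with the bound \eqref{eq:hyd-lya-dfn-bnd}) and the flow-rate $\varphi$ occur verbatim on both sides; thus the whole content of the lemma is to reconcile the gains $\chi$ and $\bar\chi$ and the two forms of the jump estimate, namely the conditional inequality \eqref{eq:hyd-lya-dfn-jump} (valid only when $V(x)\ge\chi(|u|)$) against the unconditional bound \eqref{eq:hyd-lya-alt-jump}.

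For the ``if'' part I would set $\chi:=\bar\chi$ and enlarge $\alpha$ to $\tilde\alpha$ with $\tilde\alpha(r):=\max\{\alpha(r),r\}\in\K$. Then \eqref{eq:hyd-lya-alt-flow} is literally \eqref{eq:hyd-lya-dfn-flow}; and for $(x,u)\in\cD$, $y\in G(x,u)$ with $V(x)\ge\chi(|u|)=\bar\chi(|u|)$, the bound \eqref{eq:hyd-lya-alt-jump} yields $V(y)\le\max\{\alpha(V(x)),\bar\chi(|u|)\}\le\max\{\alpha(V(x)),V(x)\}=\tilde\alpha(V(x))$, which is \eqref{eq:hyd-lya-dfn-jump}. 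This direction is routine.

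For the ``only if'' part I would keep $\alpha$ from \eqref{eq:hyd-lya-dfn-jump} and construct $\bar\chi\in\K$ with $\bar\chi\ge\chi$ (so that \eqref{eq:hyd-lya-dfn-flow} implies \eqref{eq:hyd-lya-alt-flow}) for which \eqref{eq:hyd-lya-alt-jump} holds. For $(x,u)\in\cD$, $y\in G(x,u)$ with $V(x)\ge\chi(|u|)$, \eqref{eq:hyd-lya-dfn-jump} already gives $V(y)\le\alpha(V(x))$, so only the complementary case $V(x)<\chi(|u|)$ must be handled, where $V(y)$ has to be bounded by $\bar\chi(|u|)$ even though Definition~\ref{dfn:hyd-lya} says nothing about it there. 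The idea is an a priori bound: for $s>0$ set
\[
  \sigma_0(s):=\sup\{\,V(y): y\in G(x,u),\ (x,u)\in\cD,\ |u|\le s,\ V(x)<\chi(s)\,\},
\]
with $\sigma_0(0):=0$ (the case $s=0$ is vacuous since $V(x)<\chi(0)=0$ is impossible by \eqref{eq:hyd-lya-dfn-bnd}). Because $V(x)<\chi(s)$ forces $|x|_\A<\psi_1^{-1}(\chi(s))$ via \eqref{eq:hyd-lya-dfn-bnd}, the pairs in this supremum range over a bounded subset of $\cD$; local boundedness of $G$ then keeps their images bounded, so $\sigma_0(s)<\infty$ (using $V(\cdot)\le\psi_2(|\cdot|_\A)$), and $\sigma_0$ is nondecreasing. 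One then checks $\sigma_0(s)\to0$ as $s\to0^+$: along witnesses with $s_k\downarrow0$ one has $|u_k|\to0$ and $|x_k|_\A\to0$, so after passing to a subsequence $(x_k,u_k)\to(x^\ast,0)$ with $x^\ast\in\A$; closedness of $\cD$ gives $(x^\ast,0)\in\cD$, and since $V(x^\ast)=0=\chi(0)$, \eqref{eq:hyd-lya-dfn-jump} itself forces $G(x^\ast,0)\subset\{z:V(z)\le\alpha(0)\}=\A$; outer semicontinuity together with local boundedness of $G$ then sends the corresponding $y_k$ into $\A$, whence $V(y_k)\le\psi_2(|y_k|_\A)\to0$. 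Finally, a nondecreasing function that is finite on $(0,\infty)$ and vanishes at $0^+$ can be majorized by some $\hat\chi\in\K$, and $\bar\chi:=\max\{\chi,\hat\chi\}\in\K$ then gives $V(y)\le\sigma_0(|u|)\le\bar\chi(|u|)$ in the remaining case, completing the argument.

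The main obstacle is exactly this last subcase of the ``only if'' part: Definition~\ref{dfn:hyd-lya} places no restriction on $V(y)$ when $V(x)<\chi(|u|)$, so the unconditional bound \eqref{eq:hyd-lya-alt-jump} can only be recovered from the standard well-posedness assumptions on the hybrid data (closed $\cD$, and outer semicontinuous and locally bounded $G$), with the delicate behaviour near $\A$ conveniently controlled by the Lyapunov inequality \eqref{eq:hyd-lya-dfn-jump} evaluated at $u=0$. The rest is bookkeeping with class-$\K$ functions.
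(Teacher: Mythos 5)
Your argument is essentially the proof the paper omits: the paper refers to the impulsive-system argument of Dashkovskiy--Mironchenko (Proposition~1 there), which proceeds exactly as you do --- the easy direction by absorbing $\bar\chi$ into the jump rate via $\tilde\alpha=\max\{\alpha,\id\}$, and the hard direction by enlarging the gain with a supremum of $V$ over the jump images on the region $V(x)<\chi(|u|)$ not covered by the implication, checking finiteness, monotonicity and vanishing at $0^+$, and majorizing by a class-$\K$ function. The only caveat is the one you yourself flag: your finiteness and vanishing-at-zero steps use the standard hybrid basic conditions ($\cD$ closed, $G$ outer semicontinuous and locally bounded), which the paper never states explicitly but inherits implicitly from the Cai--Teel/Goebel--Sanfelice--Teel framework (in the impulsive-system proof being adapted, the corresponding hypothesis is continuity of the jump map); some such regularity is genuinely needed, since without it one can build a jump map satisfying Definition~\ref{dfn:hyd-lya} whose images on $\{V(x)<\chi(|u|)\}$ make the unconditional bound \eqref{eq:hyd-lya-alt-jump} impossible for any $\bar\chi\in\K$. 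So your proof is correct under those standing assumptions and matches the intended route.
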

\begin{proof}
The proof is along the lines of the proof of \cite[Proposition~1]{DashkovskiyMironchenko2013SICON} for ISS Lyapunov functions for impulsive systems, and is omitted here.
\end{proof}
Exponential ISS Lyapunov functions can be characterized in a similar way. Note that the functions $ \chi $ in Definition~\ref{dfn:hyd-lya} and $ \bar\chi $ in Lemma~\ref{lem:hyd-lya-alt} are different in general.

The notion of candidate ISS Lyapunov function is defined to characterize the effect of destabilizing (non-ISS) dynamics in a hybrid system. In Definition~\ref{dfn:hyd-lya}, it is not required that $ \varphi \in \PD $ or $ \alpha < \id $ on $ (0, \infty) $, that is, $ V $ does not necessarily decrease along solutions of the hybrid system \eqref{eq:hyd}. If both of these conditions hold, then $ V $ becomes an \emph{ISS Lyapunov function}, and similar analysis to the proof of \cite[Proposition~2.7]{CaiTeel2009} can be used to show that \eqref{eq:hyd} is pre-ISS (note that ISS in \cite{CaiTeel2009} means pre-ISS in this paper; see Remark~\ref{rmk:hyd-iss-kll}). Moreover, if only one of them holds,\footnote{Namely, either the continuous or the discrete dynamics taken alone is ISS; see \cite{Sontag1989} and \cite{JiangWang2001} for the definitions of ISS for continuous and discrete dynamics, respectively.\label{ftnt:partial-iss}} we are still able to establish ISS for the sets of solution pairs satisfying suitable conditions on the density of jumps (i.e., the number of jumps per unit interval of continuous time).
\begin{Aussage}\label{prop:hyd-iss-partial}
Let $ V $ be a candidate exponential ISS Lyapunov function w.r.t. $ \A $ for the hybrid system \eqref{eq:hyd} with rate coefficients $ c, d $. For arbitrary constants $ \eta, \lambda, \mu > 0 $, denote by $ \cS[\eta,\lambda,\mu] $ the set of solution pairs $ (x, u) $ so that
\begin{equation}\label{eq:gen-adt}
    -(d - \eta)(j - k) - (c - \lambda)(t - s) \leq \mu
\end{equation}
for all $ (s, k) \preceq (t, j) $ in the hybrid time domain $ \dom x $. Then $ \cS[\eta,\lambda,\mu] $ is pre-ISS w.r.t. $ \A $.
\end{Aussage}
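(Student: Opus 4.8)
The strategy is to convert the differential/difference inequalities encoded in the candidate exponential ISS Lyapunov function into a genuine $\KL$-estimate along solutions by exploiting the density-of-jumps condition \eqref{eq:gen-adt}. First I would fix a solution pair $(x,u) \in \cS[\eta,\lambda,\mu]$ and work with $W(t,j) := V(x(t,j))$, splitting the analysis into the familiar two regimes determined by the gain $\chi$: the region where $V(x) \geq \chi(\|u\|_{(t,j)})$, on which both \eqref{eq:hyd-lya-dfn-flow} and \eqref{eq:hyd-lya-dfn-jump} with the exponential rates \eqref{eq:hyd-lya-dfn-rate-coef} apply, and the region where $V(x) < \chi(\|u\|_{(t,j)})$, where one controls $|x|_\A$ by $\gamma(\|u\|_{(t,j)})$ for an appropriate $\gamma \in \K$ built from $\psi_1,\psi_2,\chi$.

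**Core estimate.** In the first regime, along each flow interval $I_j$ one has $\dot W \leq -cW$ (Dini-derivative comparison, using \eqref{eq:hyd-lya-dfn-flow} with $\varphi(r)=cr$), hence $W(t,j) \leq e^{-c(t-t_j)}W(t_j,j)$; at each jump $W(t_{j+1},j+1) \leq e^{-d} W(t_{j+1},j)$ by \eqref{eq:hyd-lya-dfn-jump} with $\alpha(r)=e^{-d}r$. Chaining these over a hybrid time interval from $(s,k)$ to $(t,j)$ yields
\begin{equation*}
  W(t,j) \leq e^{-c(t-s)} e^{-d(j-k)} W(s,k) = e^{-\lambda(t-s)} e^{-\eta(j-k)} \cdot e^{-(c-\lambda)(t-s) - (d-\eta)(j-k)} W(s,k).
\end{equation*}
Now the density condition \eqref{eq:gen-adt} — applied with $(s,k)=(0,0)$ — bounds the troublesome exponent: $-(d-\eta)(j-k) - (c-\lambda)(t-s) \leq \mu$, so the "bad" factor is at most $e^\mu$, and we obtain $W(t,j) \leq e^\mu e^{-\eta j - \lambda t} W(0,0)$, which is an exponential $\KL$-decay in $t+j$ (since $e^{-\eta j - \lambda t} \leq e^{-\min\{\eta,\lambda\}(t+j)}$). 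Wrapping this with the bounds \eqref{eq:hyd-lya-dfn-bnd} gives the $\beta$-term of \eqref{eq:hyd-iss-dfn} with $\beta(r,s) := \psi_1^{-1}\bigl(e^\mu e^{-\min\{\eta,\lambda\}s}\psi_2(r)\bigr)$.

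**Handling the input.** The standard causality/restart argument then reconciles the two regimes: let $(t^*,j^*)$ be the last hybrid time $\preceq (t,j)$ at which $V(x) \geq \chi(\|u\|_{(t,j)})$ (if it exists). On the portion up to $(t^*,j^*)$ use the chained estimate above starting from $(0,0)$; on the portion after $(t^*,j^*)$, the Lyapunov function stays below $\chi(\|u\|_{(t,j)})$ up to one jump, which by \eqref{eq:hyd-lya-dfn-jump} at most scales it by $\max\{e^{-d},1\}$ — here one must be slightly careful when $d < 0$, absorbing the constant $e^{-\min\{d,0\}}$ into $\gamma$. Taking the maximum of the two contributions and inverting through $\psi_1$ yields the claimed bound with $\gamma(r) := \psi_1^{-1}\bigl(e^{-\min\{d,0\}}\chi(r)\bigr)$ (adjusted so $\gamma \in \K$).

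**Main obstacle.** The delicate point is not the exponential bookkeeping but the interplay at jumps near the threshold: when $d<0$ a single jump can increase $V$, so one cannot simply say $V$ never exceeds $\chi(\|u\|)$ after $(t^*,j^*)$ — it can overshoot by a bounded factor at the jump immediately following. This is exactly why the hypothesis demands $\alpha \in \K$ (footnote~\ref{ftnt:jump-rate-dfn}/\ref{ftnt:jump-rate}): it guarantees $V(y) \leq \alpha(V(x)) \leq \alpha(\chi(|u|))$ is a finite, monotone bound rather than something uncontrolled. The rest — justifying the Dini-derivative comparison lemma on each flow interval, checking measurability issues for $\|u\|_{(t,j)}$, and verifying that $\beta \in \KL$ and $\gamma \in \K$ — is routine and parallels the proof of \cite[Proposition~2.7]{CaiTeel2009} and the impulsive-systems arguments of \cite{DashkovskiyMironchenko2013SICON}.
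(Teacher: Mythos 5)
Your overall strategy is the same as the paper's: chain the exponential flow/jump estimates while the state is above the gain, factor the exponent as $-\eta(j-k)-\lambda(t-s)$ plus a remainder controlled by \eqref{eq:gen-adt}, and obtain exactly the $\beta$ of \eqref{eq:hyd-iss-partial-kl-fcn}; that part is correct. The gap is in the input-dominated regime. You define $(t^*,j^*)$ as the \emph{last} time at which $V\geq\chi(\|u\|)$, then (i) apply the chained estimate ``starting from $(0,0)$'' on the portion up to $(t^*,j^*)$, and (ii) assert that after $(t^*,j^*)$ the Lyapunov function stays below the gain up to a single jump overshoot. Step (i) is unjustified: the chained estimate needs $V(x(s,k))\geq\chi(\|u\|_{(s,k)})$ at \emph{all} intermediate times, and this is not implied by $(t^*,j^*)$ being the last time the inequality holds; so the sub-case where $V(x(t,j))\geq\chi(\|u\|_{(t,j)})$ but the gain condition failed at some earlier time is not actually covered by your decomposition. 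The paper instead restarts from $(t',j')$, the last time at which $V(x(t',j'))\leq\chi(\|u\|_{(t',j')})$: then the gain condition holds on all of $((t',j'),(t,j)]$, the chained estimate is legitimate there, and the value just after the possible threshold-crossing jump is bounded by $\max\{1,e^{-d}\}\,\chi(\|u\|_{(t',j')})$.

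Step (ii), and the resulting $\gamma$, fail precisely in the regime this proposition targets. When $c<0$ (or $d<0$), $V$ need not remain below the gain after dropping under it: it can re-cross the threshold during flow (or at a jump) and then \emph{grow} while staying above the gain, and the only cap on that growth is \eqref{eq:gen-adt}, which bounds the accumulated factor by $e^{-\eta(j-j')-\lambda(t-t')+\mu}\leq e^\mu$, not by a single jump factor. Hence the correct asymptotic gain is $\gamma(r)=\psi_1^{-1}\bigl(e^\mu\max\{1,e^{-d}\}\chi(r)\bigr)$, whereas your $\gamma(r)=\psi_1^{-1}\bigl(e^{-\min\{d,0\}}\chi(r)\bigr)$ omits the factor $e^\mu$; with $c<0<d$ and a jump-free flow interval after the crossing whose length makes $-c(t-t')$ close to $\mu$, the state exceeds your bound while satisfying \eqref{eq:gen-adt}, so \eqref{eq:hyd-iss-dfn} with your $\gamma$ cannot be concluded. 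The fix is exactly the paper's restart construction; with it, the rest of your argument (the $\beta$-branch, the role of $\alpha\in\K$ at threshold-crossing jumps, and the final inversion through \eqref{eq:hyd-lya-dfn-bnd}) goes through.
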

\begin{proof}
The proof is along the lines of the proof of \cite[Theorem~1]{HespanhaLiberzonTeel2008} for ISS of impulsive systems. Consider an arbitrary solution pair $ (x, u) \in \cS[\eta,\lambda,\mu] $. Let the function $ \chi $ be as in \eqref{eq:hyd-lya-dfn-flow} and \eqref{eq:hyd-lya-dfn-jump}. For all $ (t_0, j_0) \preceq (t_1, j_1) $ in $ \dom x $, if
\begin{equation}\label{eq:hyd-lya-gain}
    V(x(s, k)) \geq \chi(\|u\|_{(s, k)})
\end{equation}
for all $ (s, k) \in \dom x $ such that $ (t_0, j_0) \preceq (s, k) \preceq (t_1, j_1) $, then \eqref{eq:hyd-lya-dfn-flow}--\eqref{eq:hyd-lya-dfn-rate-coef} imply that
\begin{equation}\label{eq:hyd-lya-rate-sum}
\begin{split}
    V(x(t_1, j_1)) &\leq e^{-d (j_1 - j_0) - c (t_1 - t_0)} V(x(t_0, j_0)) \\
    &\leq e^{-\eta (j_1 - j_0) - \lambda (t_1 - t_0) + \mu} V(x(t_0, j_0)),
\end{split}
\end{equation}
where the last inequality follows from \eqref{eq:gen-adt}. Now consider an arbitrary $ (t, j) \in \dom x $. If \eqref{eq:hyd-lya-gain} holds for all $ (s, k) \preceq (t, j) $ in $ \dom x $, then \eqref{eq:hyd-lya-rate-sum}, together with \eqref{eq:hyd-lya-dfn-bnd}, implies that
\begin{equation}\label{eq:hyd-iss-partial-kl}
    |x(t, j)|_\A \leq \beta(|x(0, 0)|_\A, t + j)
\end{equation}
with the function $ \beta \in \KL $ defined by
\begin{equation}\label{eq:hyd-iss-partial-kl-fcn}
    \beta(r, l) := \psi_1^{-1} \big( e^{-l \min\{\eta,\, \lambda\} + \mu} \psi_2(r) \big).
\end{equation}
Otherwise, let
$$ (t', j') = \argmax_{\substack{(s, k) \in \doms x, \\ (s, k) \preceq (t, j)}} \{s + k : V(x(s, k)) \leq \chi(\|u\|_{(s, k)})\}. $$
Then \eqref{eq:hyd-lya-gain} holds for all $ (s, k) \in \dom x $ such that $ (t', j') \prec (s, k) \preceq (t, j) $; thus \eqref{eq:hyd-lya-rate-sum} implies that
\begin{equation*}
\begin{split}
    V(x(t, j)) &\leq e^{-\eta (j - j') - \lambda (t - t') + \mu} \max\{1, e^{-d}\} V(x(t', j')) \\
    &\leq e^\mu \max\{1,\, e^{-d}\} \chi(\|u\|_{(t', j')}) \\
    &\leq e^\mu \max\{1,\, e^{-d}\} \chi(\|u\|_{(t, j)}),
\end{split}
\end{equation*}
where the term $ \max\{1,\, e^{-d}\} $ is needed if $ (t', j'+1) \in \dom x $ with $ V(x(t', j')) < \chi(\|u\|_{(t', j')}) $ and $ V(x(t', j'+1)) > \chi(\|u\|_{(t', j'+1)}) $, and the second inequality is due to $ \eta, \lambda > 0 $. Hence from \eqref{eq:hyd-lya-dfn-bnd}, it follows that
\begin{equation}\label{eq:hyd-iss-partial-k}
    |x(t, j)|_\A \leq \gamma(\|u\|_{(t, j)})
\end{equation}
with the function $ \gamma \in \K $ defined by
$$ \gamma(r) := \psi_1^{-1} \big( e^\mu \max\{1,\, e^{-d}\} \chi(r) \big). $$
Combining \eqref{eq:hyd-iss-partial-kl} and \eqref{eq:hyd-iss-partial-k}, we obtain that \eqref{eq:hyd-iss-dfn} holds for all $ (x, u) \in \cS[\eta,\lambda,\mu] $ and all $ (t, j) \in \dom x $.
\end{proof}

\begin{Bemerkung}\label{rmk:hyd-iss-partial-marginal}
We observe that, if both $ c, d < 0 $, then the inequality \eqref{eq:gen-adt} cannot hold for any complete solution pair, since there is always a large enough $ t $ or $ j $ such that $ \eta j + \lambda t > \mu $. However, it may still hold for solution pairs defined on bounded hybrid time domains. Moreover, if $ c > 0 > d $, then the claim of Proposition~\ref{prop:hyd-iss-partial} also holds for $ \eta = 0 $. The proof remain unchanged except that the last inequality in \eqref{eq:hyd-lya-rate-sum} now becomes
\begin{equation*}
\begin{split}
    &\quad\,\, e^{-d(j_1 - j_0) - c(t_1 - t_0)} V(x(t_0, j_0)) \\
    &\leq e^{-\lambda(t_1 - t_0) + \mu} V(x(t_0, j_0)) \\
    &\leq e^{(\lambda^2/c - \lambda)(t_1-t_0) - \lambda^2 (t_1 - t_0)/c  + \mu} V(x(t_0, j_0)) \\
    &\leq e^{\lambda d(j_1 - j_0)/c - \lambda^2 (t_1 - t_0)/c + (1 + \lambda/c) \mu} V(x(t_0, j_0)),
\end{split}
\end{equation*}
where the first inequality follows from \eqref{eq:gen-adt} with $ \eta = 0 $, and the last one comes from the estimate
\begin{equation*}
    e^{(\lambda^2/c - \lambda)(t_1-t_0)} = e^{(\lambda/c)(\lambda - c)(t_1-t_0)} \leq e^{(\lambda/c)(d (j_1 - j_0) + \mu)},
\end{equation*}
and the definition \eqref{eq:hyd-iss-partial-kl-fcn} becomes
$$ \beta(r, l) := \psi_1^{-1} \big( e^{-l \min\{ -\lambda d/c,\, \lambda^2/c\} + (1 + \lambda/c)\, \mu} \psi_2(r) \big). $$
Analogously, if $ d > 0 > c $, then the claim of Proposition~\ref{prop:hyd-iss-partial} also holds for $ \lambda = 0 $.
\end{Bemerkung}

\begin{Bemerkung}\label{rmk:hyd-iss-partial-adt}
If $ c > 0 \geq d $, then we can divide both sides of \eqref{eq:gen-adt} by $ -(d - \eta) > 0 $ to transform it to an average dwell-time (ADT) condition \cite{HespanhaMorse1999}. Analogously, if $ d > 0 \geq c $, then we can divide both sides of \eqref{eq:gen-adt} by $ -(c - \lambda) > 0 $ to transform it to the reverse average dwell-time (RADT) condition \cite{HespanhaLiberzonTeel2008}.
\end{Bemerkung}

Given a candidate exponential ISS Lyapunov function with rate coefficients $ c > 0 $ and/or $ d > 0 $, we can determine pre-ISS sets of solution pairs via Proposition~\ref{prop:hyd-iss-partial}. In the following section, we investigate the formulation of such functions for interconnections of hybrid systems.

\section{Interconnections and small-gain theorems}\label{sec:smg}
We are interested in the case where the hybrid system \eqref{eq:hyd} is decomposed as
\begin{equation}\label{eq:inter}
\begin{aligned}
    &\dot x_i \in F_i(x, u),\quad i = 1, \ldots, n, &\qquad &(x, u) \in \cC, \\
    &x^+_i \in G_i(x, u),\quad i = 1, \ldots, n, &\qquad &(x, u) \in \cD,
\end{aligned}
\end{equation}
where $ x := (x_1, \ldots, x_n) \in \cX \subset \R^N $ with $ x_i \in \cX_i \subset \R^{N_i} $ is the state, $ u \in \cU \subset \R^M $ is the common (external) input, $ \cC := \cC_1 \times \cdots \times \cC_n \times \cC_u $ with $ \cC_i \subset \cX_i $ and $ \cC_u \subset \cU $ is the flow set, $ \cD := \cD_1 \times \cdots \times \cD_n \times \cD_u $ with $ \cD_i \subset \cX_i $ and $ \cD_u \subset \cU $ is the jump set, $ F := (F_1, \ldots, F_n) $ with $ F_i: \cC \rightrightarrows \R^{N_i} $ is the flow map, and $ G := (G_1, \ldots, G_n) $ with $ G_i: \cD \rightrightarrows \cX_i $ is the jump map. The dynamics of $ x_i $ is called the $ i $-th subsystem of \eqref{eq:inter} and is denoted by $ \Sigma_i $. The interconnection \eqref{eq:inter} is denoted by $ \Sigma $. For each $ \Sigma_i $, the states of other subsystems are treated as (internal) inputs.

Many systems with hybrid behaviors can be naturally transformed into the form of \eqref{eq:inter}. As demonstrated in \cite[Section~V]{LiberzonNesicTeel2014}, a networked control system can be treated as an interconnection of continuous states and hybrid errors due to the network protocol, and a quantized control system can be modeled as an interconnection of continuous states and a discrete quantizer. Moreover, the ``natural decomposition'' of a hybrid system \eqref{eq:hyd} as an interconnection of its continuous and discrete parts is often of interest as well.

\begin{Bemerkung}
In \eqref{eq:inter}, all the subsystems, as well as the interconnection, share the same flow set $ \cC $ and the same jump set $ \cD $, which justifies the view of \eqref{eq:inter} as an interconnection of $ n $ hybrid subsystems.
\end{Bemerkung}

\begin{Bemerkung}\label{rmk:inter-sub-lya}
Based on Lemma~\ref{lem:hyd-lya-alt} and standard considerations clarifying the influence of particular subsystems (see, e.g., \cite[Lemma~2.4.1]{Mironchenko2012}), one can show that a function $ V_i: \cX_i \to \R_+ $ is a candidate ISS Lyapunov function w.r.t. a set $ \A_i \subset \cX_i $ for the subsystem $ \Sigma_i $ iff $ V_i $ is locally Lipschitz outside $ \A_i $, and
\begin{enumerate}[1.]
    \item there exist $ \psi_{i1}, \psi_{i2} \in \Kinf $ such that
        \begin{equation}\label{eq:inter-sub-lya-bnd}
            \psi_{i1}(|x_i|_{\A_i}) \leq V_i(x_i) \leq \psi_{i2}(|x_i|_{\A_i}) \quad \forall x_i \in \cX_i;
        \end{equation}
    \item there exist \emph{internal gains} $ \chi_{ij} \in \K $ for $ j \neq i $ and $ \chi_{ii} \equiv 0 $, an \emph{external gain} $ \chi_i \in \K $, and a continuous function $ \varphi_i: \R_+ \to \R $ with $ \varphi_i(0) = 0 $ such that for all $ (x, u) \in \cC $ with $ x_i \notin \A_i $,
        \begin{equation}\label{eq:inter-sub-lya-gain}
            V_i(x_i) \geq \max \!\Big\{ \!\max_{j=1}^{n} \chi_{ij}(V_j(x_j)),\, \chi_i(|u|) \Big\}
        \end{equation}
        implies that
        \begin{equation}\label{eq:inter-sub-lya-flow}
            \dot V_i(x_i; y_i) \leq -\varphi_i(V_i(x_i)) \qquad \forall y_i \in F_i(x, u);
        \end{equation}
    \item there is a function $ \alpha_i \in \K $ such that for all $ (x, u) \in \cD $,
        \begin{multline}\label{eq:inter-sub-lya-jump}
            V_i(y_i) \leq \max \!\Big\{ \alpha_i(V_i(x_i)),\, \max_{j=1}^{n} \chi_{ij}(V_j(x_j)), \\
            \chi_i(|u|) \Big\} \qquad \forall y_i \in G_i(x, u).
        \end{multline}
\end{enumerate}
In addition, $ V_i $ is a candidate exponential ISS Lyapunov function w.r.t. $ \A_i $ with rate coefficients $ c_i, d_i $ iff
\begin{equation}\label{eq:inter-sub-lay-rate-coef}
    \varphi_i(r) \equiv c_i r, \quad \alpha_i(r) \equiv e^{-d_i} r.
\end{equation}
\end{Bemerkung}

Suppose that for each subsystem $ \Sigma_i $, a candidate ISS Lyapunov function $ V_i $ is given (for discussions regarding the existence of candidate exponential ISS Lyapunov functions for hybrid systems, see \cite[Theorem~8.1]{CaiTeelGoebel2007}, \cite[Section~2]{CaiTeel2009}, and \cite[Remark~3]{YangLiberzonMironchenko2016}). The question of whether the interconnection \eqref{eq:inter} is pre-ISS depends on properties of the \emph{gain operator} $ \Gamma: \R_+^n \to \R_+^n $ defined by
\begin{equation}\label{eq:gain-operator}
    \Gamma(r_1, \ldots, r_n) := \!\Big( \max_{j=1}^{n} \chi_{1j}(r_j), \ldots, \max_{j=1}^{n} \chi_{nj}(r_j) \Big).\!
\end{equation}
To construct a candidate ISS Lyapunov function for the interconnection \eqref{eq:inter}, we adopt the notion of $ \Omega $-path \cite{DashkovskiyRufferWirth2010}.
\begin{Definition}\label{dfn:omega-path}
Given a function $ \Gamma: \R_+^n \to \R_+^n $, a function $ \sigma := (\sigma_1, \ldots, \sigma_n) $ with $ \sigma_i \in \Kinf,\, i = 1, \ldots, n $ is called an \emph{$ \Omega $-path w.r.t. $ \Gamma $} if
\begin{enumerate}[1.]
    \item all $ \sigma_i^{-1} $ are locally Lipschitz on $ (0, \infty) $;
    \item for each compact set $ P \subset (0, \infty) $, there exist finite constants $ K_2 > K_1 > 0 $ such that for all $ i $,
        \begin{equation*}
            0 < K_1 \leq (\sigma_i^{-1})' \leq K_2
        \end{equation*}
        for all points of differentiability of $ \sigma_i^{-1} $ in $ P $;
    \item the function $ \Gamma $ is a contraction on $ \sigma(\cdot) $, that is,
        \begin{equation}\label{eq:omega-path-contraction}
            \Gamma(\sigma(r)) < \sigma(r) \qquad \forall r > 0.
        \end{equation}
\end{enumerate}
\end{Definition}

\begin{Bemerkung}\label{rmk:gain-operator-sum}
In this paper, we consider primarily $ \Omega $-paths w.r.t. the gain operator $ \Gamma $ defined by \eqref{eq:gain-operator}, due to the terms $ \max_{j=1}^n \chi_{ij}(V_j(x_j)) $ in \eqref{eq:inter-sub-lya-gain} and \eqref{eq:inter-sub-lya-jump} when formulating candidate ISS Lyapunov functions for subsystems (which will be clear from the statement and proof of Theorem~\ref{thm:inter-lya} below). However, there are other equivalent formulations of candidate ISS Lyapunov functions for subsystems, which will naturally lead to gain operators in different forms (see, e.g., \cite{DashkovskiyRufferWirth2007,DashkovskiyRufferWirth2010}). In particular, if \eqref{eq:inter-sub-lya-gain} and \eqref{eq:inter-sub-lya-jump} were formulated using $ \sum_{j=1}^n \chi_{ij}(V_j(x_j)) $ instead of $ \max_{j=1}^n \chi_{ij}(V_j(x_j)) $, we would arrive at the alternative gain operator $ \bar\Gamma: \R_+^n \to \R_+^n $ defined by
\begin{equation*}\label{operator_gamma_sum_type}
    \bar\Gamma(r_1, \ldots, r_n) := \bigg( \sum_{j=1}^{n} \chi_{1j}(r_j), \ldots, \sum_{j=1}^{n} \chi_{nj}(r_j) \!\bigg).
\end{equation*}
Compared with \eqref{eq:gain-operator}, we see that $ \Gamma(v) \leq \bar\Gamma(v) $ for all $ v \neq 0 $; thus every $ \Omega $-path w.r.t. $ \bar\Gamma $ is an $ \Omega $-path w.r.t. $ \Gamma $. This alternative construction will be useful in establishing Theorem~\ref{thm:inter-lya-lin} for the case of linear internal gains below.
\end{Bemerkung}

We say that a function $ \Gamma: \R_+^n \to \R_+^n $ satisfies the \emph{small-gain condition} if
\begin{equation}\label{eq:smg-dfn}
    \Gamma(v) \ngeq v \qquad \forall v \in \R_+^n \backslash \{0\},
\end{equation}
or equivalently,
$$ \Gamma(v) \geq v \iff v = 0. $$
As reported in \cite[Proposition~2.7 and Remark~2.8]{KarafyllisJiang2011} (see also \cite[Theorem~5.2]{DashkovskiyRufferWirth2010}), if \eqref{eq:smg-dfn} holds for the gain operator $ \Gamma $ defined by \eqref{eq:gain-operator}, then there exists an $ \Omega $-path $ \sigma $ w.r.t. $ \Gamma $. Furthermore, $ \sigma $ can be made {smooth on $ (0, \infty) $} via standard mollification arguments \cite[Appendix~B.2]{Grune2002Book}. In this case, we construct a candidate ISS Lyapunov function for the interconnection \eqref{eq:inter} based on those for the subsystems and the corresponding $ \Omega $-path.
\begin{Satz}\label{thm:inter-lya}
Consider the interconnection \eqref{eq:inter}. Suppose that each subsystem $ \Sigma_i $ admits a candidate ISS Lyapunov function $ V_i $ w.r.t. a set $ \A_i $ with the internal gains $ \chi_{ij} $ as in \eqref{eq:inter-sub-lya-gain}, and  the small-gain condition \eqref{eq:smg-dfn} holds for the gain operator $ \Gamma $ defined by \eqref{eq:gain-operator}. Let $ \sigma = (\sigma_1, \ldots, \sigma_n) $ be an $ \Omega $-path w.r.t. $ \Gamma $ which is smooth on $ (0, \infty) $. Then the function $ V: \cX \to \R_+ $ defined by
\begin{equation}\label{eq:inter-lya-fcn}
    V(x) := \max_{i=1}^{n} \sigma_i^{-1}(V_i(x_i))
\end{equation}
is a candidate ISS Lyapunov function w.r.t. the set $ \A := \A_1 \times \cdots \times \A_n $ for \eqref{eq:inter}.
\end{Satz}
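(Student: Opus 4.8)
The plan is to verify directly that $V(x) = \max_{i=1}^n \sigma_i^{-1}(V_i(x_i))$ satisfies the three conditions of Lemma~\ref{lem:hyd-lya-alt} (the alternative characterization), since that form — with a $\max$ over the gain and over the jump estimate — is precisely what the $\max$-type gain operator $\Gamma$ and the $\Omega$-path are tailored to. First I would establish the $\Kinf$ sandwich bound \eqref{eq:hyd-lya-dfn-bnd}: since each $V_i$ satisfies \eqref{eq:inter-sub-lya-bnd} and each $\sigma_i^{-1}$ is of class $\Kinf$, the composition $\sigma_i^{-1}\circ V_i$ is bounded above and below by $\Kinf$ functions of $|x_i|_{\A_i}$; taking the maximum over $i$ and using the equivalence of $|x|_\A$ with $\max_i |x_i|_{\A_i}$ (up to $\Kinf$ reparametrization, since $\A = \A_1\times\cdots\times\A_n$) yields $\psi_1,\psi_2\in\Kinf$. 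The local Lipschitz property of $V$ outside $\A$ follows because $\sigma$ is smooth on $(0,\infty)$ (hence each $\sigma_i^{-1}$ is locally Lipschitz there, as in Definition~\ref{dfn:omega-path}), the $V_i$ are locally Lipschitz outside $\A_i$, and a finite maximum of locally Lipschitz functions is locally Lipschitz; away from $\A$ at least one argument is bounded away from $0$.

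The heart of the argument is the flow condition. Fix $(x,u)\in\cC$ with $x\notin\A$, and suppose $V(x)\geq\bar\chi(|u|)$ for a gain $\bar\chi\in\K$ to be chosen. Pick an index $i$ achieving the maximum, so $V(x) = \sigma_i^{-1}(V_i(x_i))$, equivalently $V_i(x_i) = \sigma_i(V(x))$, and $V_j(x_j)\leq\sigma_j(V(x))$ for all $j$. The key step is to show this forces the trigger condition \eqref{eq:inter-sub-lya-gain} for subsystem $i$: using $V_j(x_j)\leq\sigma_j(V(x))$ we get $\chi_{ij}(V_j(x_j))\leq\chi_{ij}(\sigma_j(V(x)))\leq\max_k\chi_{ik}(\sigma_k(V(x))) = (\Gamma(\sigma(V(x))))_i < \sigma_i(V(x)) = V_i(x_i)$ by the contraction property \eqref{eq:omega-path-contraction}; and for the external input, choosing $\bar\chi$ large enough (concretely $\bar\chi := \max_i \sigma_i^{-1}\circ\chi_i$ composed appropriately, or rather $\chi_i \leq \sigma_i\circ\bar\chi$-type estimates) ensures $\chi_i(|u|)\leq\sigma_i(\bar\chi(|u|))\leq\sigma_i(V(x)) = V_i(x_i)$. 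Hence \eqref{eq:inter-sub-lya-flow} gives $\dot V_i(x_i;y_i)\leq -\varphi_i(V_i(x_i))$. Then by the chain rule for Dini derivatives along $\sigma_i^{-1}$ (smooth, so $(\sigma_i^{-1})' > 0$ by the bounds in Definition~\ref{dfn:omega-path}), $\dot{(\sigma_i^{-1}\circ V_i)}(x_i;y_i) = (\sigma_i^{-1})'(V_i(x_i))\,\dot V_i(x_i;y_i) \leq -(\sigma_i^{-1})'(V_i(x_i))\,\varphi_i(\sigma_i(V(x)))$, and since $V(x) = \sigma_i^{-1}(V_i(x_i))$ and (away from $\A$) $V(x)$ lies in a compact subinterval of $(0,\infty)$ on which $(\sigma_i^{-1})'$ is bounded below by a positive constant, the right-hand side is $\leq -\varphi(V(x))$ for a continuous $\varphi$ with $\varphi(0)=0$ built from the $\varphi_i$ and the Lipschitz data of the $\sigma_i^{-1}$. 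One subtlety: the maximizing index $i$ may not be unique and may depend on $x$; here I would invoke the standard fact that the Dini derivative of a finite $\max$ is bounded by the max of the Dini derivatives over the active indices, each of which satisfies the above bound, so the estimate on $\dot V$ still holds.

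The jump condition is analogous but cleaner. For $(x,u)\in\cD$ and $y_i\in G_i(x,u)$, estimate \eqref{eq:inter-sub-lya-jump} gives $V_i(y_i)\leq\max\{\alpha_i(V_i(x_i)),\,\max_j\chi_{ij}(V_j(x_j)),\,\chi_i(|u|)\}$. Applying $\sigma_i^{-1}$ and using $V_j(x_j)\leq\sigma_j(V(x))$ together with the contraction \eqref{eq:omega-path-contraction} as above, the middle term satisfies $\sigma_i^{-1}(\max_j\chi_{ij}(V_j(x_j)))\leq\sigma_i^{-1}((\Gamma(\sigma(V(x))))_i) < \sigma_i^{-1}(\sigma_i(V(x))) = V(x)$, the last term is $\leq\bar\chi(|u|)$ for the chosen $\bar\chi$, and the first term gives $\sigma_i^{-1}(\alpha_i(V_i(x_i))) = \sigma_i^{-1}(\alpha_i(\sigma_i(V(x))))$; defining $\alpha(r) := \max\{r,\,\max_i \sigma_i^{-1}\circ\alpha_i\circ\sigma_i(r)\}$ (which is in $\K$, using $\alpha_i\in\K$ — this is where the remark that $\alpha_i$ must be $\K$ rather than merely $\PD$ is used) we obtain $\sigma_i^{-1}(V_i(y_i))\leq\max\{\alpha(V(x)),\,\bar\chi(|u|)\}$. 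Taking the maximum over $i$ — noting each $V(y) = \max_i\sigma_i^{-1}(V_i(y_i))$ — yields \eqref{eq:hyd-lya-alt-jump}. I expect the main obstacle to be the careful bookkeeping in the flow step: managing the chain rule for Dini derivatives through $\sigma_i^{-1}$ when $\sigma_i^{-1}$ is only Lipschitz (resolved by the smoothness assumption and the two-sided derivative bounds), handling the non-unique maximizer, and extracting a single continuous $\varphi$ with $\varphi(0)=0$ that works uniformly — this requires the local boundedness of $(\sigma_i^{-1})'$ away from the origin from Definition~\ref{dfn:omega-path}, which is exactly why that condition is built into the definition of $\Omega$-path.
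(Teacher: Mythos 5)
Your proposal is correct and follows essentially the same route as the paper's proof: verify the conditions of Lemma~\ref{lem:hyd-lya-alt} with $\bar\chi = \max_i \sigma_i^{-1}\circ\chi_i$, use the contraction property \eqref{eq:omega-path-contraction} at a maximizing index to trigger \eqref{eq:inter-sub-lya-gain}, push the Dini derivative through $\sigma_i^{-1}$ (handling non-unique maximizers via the maximum over active indices), and use monotonicity of $\alpha_i \in \K$ at jumps. The only substantive deviation is your jump rate $\alpha(r) := \max\{r,\, \max_i \sigma_i^{-1}(\alpha_i(\sigma_i(r)))\}$, which suffices for the theorem as stated but, unlike the paper's choice $\alpha(r) = \max_{i,j}\{\sigma_i^{-1}(\alpha_i(\sigma_i(r))),\, \sigma_i^{-1}(\chi_{ij}(\sigma_j(r)))\}$, never drops below the identity and hence would not support the downstream results (Corollary~\ref{cor:inter-iss}, Theorem~\ref{thm:inter-lya-lin}) where $\alpha < \id$ on $(0,\infty)$ or an explicit rate $e^{-d}$ is needed.
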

\begin{proof}
As each $ \sigma_i \in \Kinf $ is smooth on $ (0, \infty) $ and each $ V_i $ is locally Lipschitz outside $ \A_i $, it follows that each $ \sigma_i^{-1} \circ V_i $ is locally Lipschitz outside $ \A_i $. Hence the function $ V $ defined by \eqref{eq:inter-lya-fcn} is locally Lipschitz outside $ \A $. In the following, we prove that it satisfies the conditions of Lemma~\ref{lem:hyd-lya-alt}, by combining and extending the arguments in the proofs of \cite[Theorem~5.3]{DashkovskiyRufferWirth2010} and \cite[Theorem~III.1]{LiberzonNesicTeel2014}.

First, consider the functions $ \psi_1, \psi_2$ defined by
\begin{equation*}
\begin{aligned}
    \psi_1(r) &:= \min_{i=1}^{n} \sigma_i^{-1}(\psi_{i1}(r/\sqrt{n})), & \ \ r\in\R_+,\\
    \psi_2(r) &:= \max_{i=1}^{n} \sigma_i^{-1}(\psi_{i2}(r)), & \ \ r\in\R_+
\end{aligned}
\end{equation*}
with $ \psi_{i1}, \psi_{i2} $ as in \eqref{eq:inter-sub-lya-bnd}. Since $ \sigma_i, \psi_{i1}, \psi_{i2} \in \Kinf $, we have that $ \psi_1, \psi_2 \in \Kinf $. Thus \eqref{eq:inter-sub-lya-bnd} implies \eqref{eq:hyd-lya-dfn-bnd}. In particular,
\begin{equation*}
\begin{split}
    &\psi_1(|x|_\A) \leq \min_{i=1}^{n} \sigma_i^{-1} \Big( \psi_{i1} \Big( \max_{j=1}^{n} |x_j|_{\A_j} \Big) \Big) \\
    &\leq \max_{j=1}^{n} \sigma_j^{-1}(\psi_{j1}(|x_j|_{\A_j})) \leq \max_{j=1}^{n} \sigma_j^{-1}(V_j(x_j)) = V(x).
\end{split}
\end{equation*}
Second, consider the gain function $ \bar\chi$ defined by
\begin{equation}\label{eq:inter-lya-gain-fcn}
    \bar\chi(r) := \max_{i=1}^{n} \sigma_i^{-1}(\chi_i(r)), \ \ r\in\R_+
\end{equation}
with $ \chi_i $ as in \eqref{eq:inter-sub-lya-gain}, and the function $ \varphi$ defined by
\begin{equation}\label{eq:inter-lya-flow-fcn}
    \varphi(r) := \min_{i=1}^{n}\, (\sigma_i^{-1})'(\sigma_i(r))\, \varphi_i(\sigma_i(r)),\ \ r\in\R_+
\end{equation}
with $ \varphi_i $ as in \eqref{eq:inter-sub-lya-flow}. As all $ \sigma_i \in \Kinf $ are smooth on $ (0, \infty) $, $ \chi_i \in \K $, and $ \varphi_i $ are continuous with $ \varphi_i(0) = 0 $, it follows that $ \bar\chi \in \K $ and $ \varphi $ is continuous with $ \varphi(0) = 0 $. Consider the sets $ \cM_i \subset \cX,\, i = 1, \ldots, n $ defined by
\begin{equation*}
    \cM_i := \Big\{ x \in \cX : \sigma_i^{-1}(V_i(x_i)) > \max_{j \neq i} \sigma_j^{-1}(V_j(x_j)) \Big\}.
\end{equation*}
The fact that all $ V_i $ and $ \sigma_i^{-1} $ are continuous implies that all $ \cM_i $ are open in $ \cX $, $ \cM_i \cap \cM_j = \emptyset $ for all $ j \neq i $, and $ \cX = \bigcup_{i=1}^{n} \overline{\cM_i} $, where $ \overline{\cM_i} $ is the closure of $ \cM_i $ in $ \cX $. Thus for each $ (x, u) \in \cC $ with $ x \notin \A $, there are two possibilities:
\begin{enumerate}[1),leftmargin=0\parindent,itemindent=*]
    \item There is a unique $ i \in \{1, \ldots, n\} $ s.t. $ x \in \cM_i $. Then
        \begin{equation}\label{eq:inter-lya-flow-case-1}
            V(x) = \sigma_i^{-1}(V_i(x_i)),
        \end{equation}
        and $ x_i \notin \A_i $ due to $ x \notin \A $. Hence
        \begin{align}
            &V_i(x_i) = \sigma_i(V(x)) \notag\\
            &\geq \max_{j=1}^{n} \chi_{ij}(\sigma_j(V(x))) \geq \max_{j=1}^{n} \chi_{ij}(V_j(x_j)), \label{eq:inter-lya-flow-gain-int}
        \end{align}
        where the first inequality follows from \eqref{eq:omega-path-contraction}, and the second one follows from \eqref{eq:inter-lya-fcn}. Also, if $ V(x) \geq \bar\chi(|u|) $, then $ V(x) \geq \max_{j=1}^{n} \sigma_j^{-1} (\chi_j(|u|)) $ due to \eqref{eq:inter-lya-gain-fcn}; thus
        \begin{align}
            &V_i(x_i) = \sigma_i(V(x)) \geq \sigma_i \Big( \max_{j=1}^{n} \sigma_j^{-1} (\chi_j(|u|)) \Big) \notag\\
            &\geq \sigma_i (\sigma_i^{-1} (\chi_i(|u|))) = \chi_i(|u|). \label{eq:inter-lya-flow-gain-ext}
        \end{align}
        Hence \eqref{eq:inter-sub-lya-gain}, and therefore \eqref{eq:inter-sub-lya-flow}, holds. Given an arbitrary $ y = (y_1, \ldots, y_n) \in F(x, u) $, as $ \cM_i $ is open, it follows that $ x + h y \in \cM_i $ for all small enough $ h > 0 $; thus $ V(x + h y) = \sigma_i^{-1}(V_i(x_i + h y_i)) $. Hence
        \begin{equation*}
        \begin{split}
            \dot V(x; y)
            &= \limsup_{h \searrow 0} \dfrac{V(x + h y) - V(x)}{h} \\
            &= \limsup_{h \searrow 0} \dfrac{\sigma_i^{-1}(V_i(x_i + h y_i)) - \sigma_i^{-1}(V_i(x_i))}{h} \\
            &= (\sigma_i^{-1})'(V_i(x_i)) \limsup_{h \searrow 0} \dfrac{V_i(x_i + h y_i) - V_i(x_i)}{h} \\
            &= (\sigma_i^{-1})'(V_i(x_i)) \dot V_i (x_i; y_i) \\
            &\leq -(\sigma_i^{-1})'(\sigma_i(V(x)))\, \varphi_i(\sigma_i(V(x))) \\
            &\leq -\varphi(V(x)),
        \end{split}
        \end{equation*}
        where the first inequality follows from \eqref{eq:inter-sub-lya-flow} and \eqref{eq:inter-lya-flow-case-1}, and the last one follows from \eqref{eq:inter-lya-flow-fcn}.
    \item There is a subset $ I(x) \subset \{1, \ldots, n\} $ of indices with the cardinality $ |I(x)| \geq 2 $ such that $ x \in \bigcap_{i \in I(x)} \partial \cM_i $, where $ \partial\cM_i $ denotes the boundary of $ \cM_i $ in $ \cX $ and satisfies that $ \partial\cM_i = \overline{\cM_i} \backslash \cM_i $ as $ \cM_i $ is open in $ \cX $. Then \eqref{eq:inter-lya-flow-case-1} and $ x_i \notin \A_i $ hold for all $ i \in I(x) $. Following similar arguments to those in the previous case, if $ V(x) \geq \bar\chi(|u|) $, then \eqref{eq:inter-lya-flow-gain-int} and \eqref{eq:inter-lya-flow-gain-ext}, and therefore \eqref{eq:inter-sub-lya-flow}, hold for all $ i \in I(x) $. Given an arbitrary $ y = (y_1, \ldots, y_n) \in F(x, u) $, as all $ \cM_i $ are open, it follows that $ x + h y \in \big( \bigcap_{i \in I(x)} \partial\cM_i \big) \cap \big( \bigcap_{i \in I(x)} \cM_i \big) $ for all small enough $ h > 0 $; thus $ V(x + h y) = \max_{i \in I(x)} \sigma_i^{-1}(V_i(x_i + h y_i)) $. Hence
        \begin{equation*}
        \begin{split}
            \dot V(x; y)
            &= \limsup_{h \searrow 0} \dfrac{V(x + h y) - V(x)}{h} \\
            &= \limsup_{h \searrow 0} \dfrac{1}{h} \Big( \max_{i \in I(x)} \sigma_i^{-1}(V_i(x_i + h y_i)) - V(x) \Big) \\
            &= \limsup_{h \searrow 0} \max_{i \in I(x)} \dfrac{\sigma_i^{-1}(V_i(x_i + h y_i)) - \sigma_i^{-1}(V_i(x_i))}{h} \\
            &= \max_{i \in I(x)} \limsup_{h \searrow 0} \dfrac{\sigma_i^{-1}(V_i(x_i + h y_i)) - \sigma_i^{-1}(V_i(x_i))}{h} \\
            &= \max_{i \in I(x)} (\sigma_i^{-1})'(V_i(x_i)) \dot V_i(x_i; y_i) \\
            &\leq \max_{i \in I(x)} -(\sigma_i^{-1})'(\sigma_i(V(x)))\, \varphi_i(\sigma_i(V(x))) \\
            &\leq -\varphi(V(x)),
        \end{split}
        \end{equation*}
        where the fourth equality follows partially from the continuity of all $ V_i $ and $ \sigma_i^{-1} $ (cf. the proof of \cite[Theorem~4]{DashkovskiyMironchenko2013}); the first inequality follows from \eqref{eq:inter-sub-lya-flow} and \eqref{eq:inter-lya-flow-case-1} for $ i \in I(x) $, and the last one follows from \eqref{eq:inter-lya-flow-fcn}.
\end{enumerate}
Hence \eqref{eq:hyd-lya-alt-flow} holds for each $ (x, u) \in \cC $.

Last, consider the function $ \alpha: \R_+ \to \R_+ $ defined by
\begin{equation}\label{eq:inter-lya-jump-fcn}
    \alpha(r) := \max_{i, j=1}^{n} \!\Big\{ \sigma_i^{-1}(\alpha_i(\sigma_i(r))),\, \sigma_i^{-1}(\chi_{ij}(\sigma_j(r))) \Big\}
\end{equation}
with $ \alpha_i $ and $ \chi_{ij} $ as in \eqref{eq:inter-sub-lya-jump}. As all $ \sigma_i \in \Kinf $, $ \chi_{ij} \in \K $ for $ j \neq i $, $ \chi_{ii} \equiv 0 $, and $ \alpha_i \in \K $, it follows that $ \alpha \in \K $. Consider an arbitrary $ (x, u) \in \cD $. From \eqref{eq:inter-lya-fcn} and \eqref{eq:inter-lya-jump-fcn}, it follows that\footnote{Note that, if $ \alpha_i $ is of class $ \PD $ but not increasing, then it is possible that $ \sigma_i(V(x)) > V_i(x_i) $ but $ \alpha_i(\sigma_i(V(x))) < \alpha_i(V_i(x_i)) $ for some $ i $; thus the inequality following this footnote may not hold. A similar issue arises in the proof of \cite[Theroem~III.1]{LiberzonNesicTeel2014} where it was overlooked, but could be fixed by majorizing the class $ \PD $ functions $ \lambda_1, \lambda_2 $ with class $ \K $ ones.\label{ftnt:jump-rate}}
\begin{equation*}
    \alpha(V(x)) \geq \max_{i, j=1}^{n} \!\Big\{ \sigma_i^{-1}(\alpha_i(V_i(x_i))),\, \sigma_i^{-1}(\chi_{ij}(V_j(x_j))) \Big\}.
\end{equation*}
Also, \eqref{eq:inter-lya-gain-fcn} implies that $ \bar\chi(|u|) = \max_{i=1}^n \sigma_i^{-1}(\chi_i(|u|)) $. Combining the previous two equations with \eqref{eq:inter-sub-lya-jump}, we obtain that for all $ y = (y_1, \ldots, y_n) \in G(x, u) $,
$$ V(y) = \max_{i=1}^{n} \sigma_i^{-1}(V_i(y_i)) \leq \max\{\alpha(V(x)),\, \bar\chi(|u|)\}. $$
Hence \eqref{eq:hyd-lya-alt-jump} holds for each $ (x, u) \in \cD $.

Therefore, from Lemma~\ref{lem:hyd-lya-alt}, it follows that $ V $ is a candidate ISS Lyapunov function w.r.t. $ \A $ for \eqref{eq:inter}.
\end{proof}

Theorem~\ref{thm:inter-lya} is a powerful tool in establishing ISS of interconnections of hybrid subsystems. In the following, we inspect some of its implications.

If each subsystem of \eqref{eq:inter} admits an ISS Lyapunov function, then Theorem~\ref{thm:inter-lya} implies the following result, which generalizes \cite[Theorem~III.1]{LiberzonNesicTeel2014} and \cite[Theorem~3.6]{DashkovskiyKosmykov2013}.
\begin{Korollar}\label{cor:inter-iss}
Consider the interconnection \eqref{eq:inter}. Suppose that each subsystem $ \Sigma_i $ admits an ISS Lyapunov function $ V_i $ w.r.t. a set $ \A_i $ (i.e., $ \varphi_i \in \PD $ and $ \alpha_i < \id $ on $ (0, \infty) $ in \eqref{eq:inter-sub-lya-flow} and \eqref{eq:inter-sub-lya-jump}, respectively) with the internal gains $ \chi_{ij} $ as in \eqref{eq:inter-sub-lya-gain}, and the small-gain condition \eqref{eq:smg-dfn} holds for the gain operator $ \Gamma $ defined by \eqref{eq:gain-operator}. Then \eqref{eq:inter} is pre-ISS w.r.t. $ \A $.
\end{Korollar}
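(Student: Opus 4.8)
The plan is to apply Theorem~\ref{thm:inter-lya} to obtain a candidate ISS Lyapunov function $V$ for the interconnection \eqref{eq:inter}, verify that under the hypotheses it is in fact a genuine ISS Lyapunov function (not merely a candidate one), and then invoke the standard ISS-Lyapunov argument to conclude pre-ISS.

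First I would note that, since each $\Sigma_i$ admits an ISS Lyapunov function $V_i$ with internal gains $\chi_{ij}$, the small-gain condition \eqref{eq:smg-dfn} holds for the associated gain operator $\Gamma$, and an $\Omega$-path $\sigma$ w.r.t.\ $\Gamma$ exists and can be taken smooth on $(0,\infty)$ (as recalled before Theorem~\ref{thm:inter-lya}). Thus Theorem~\ref{thm:inter-lya} yields that $V(x) := \max_{i=1}^n \sigma_i^{-1}(V_i(x_i))$ is a candidate ISS Lyapunov function w.r.t.\ $\A = \A_1 \times \cdots \times \A_n$ for \eqref{eq:inter}, with flow rate function $\varphi$ given by \eqref{eq:inter-lya-flow-fcn} and jump rate function $\alpha$ given by \eqref{eq:inter-lya-jump-fcn}.

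The core step is to check that $\varphi \in \PD$ and $\alpha < \id$ on $(0,\infty)$, i.e.\ that $V$ is a true ISS Lyapunov function. For $\varphi$: since each $\varphi_i \in \PD$ by assumption and each $(\sigma_i^{-1})'(\sigma_i(r)) > 0$ for $r > 0$ (positivity of the derivative follows from property~2 of the $\Omega$-path, as $\sigma_i(r)$ ranges over a compact subset of $(0,\infty)$ when $r$ does), each term in the minimum defining $\varphi(r)$ is strictly positive for $r>0$ and zero at $r=0$; continuity of $\varphi$ with $\varphi(0)=0$ is already part of Theorem~\ref{thm:inter-lya}, so $\varphi \in \PD$. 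For $\alpha$: by \eqref{eq:inter-lya-jump-fcn}, $\alpha(r)$ is the max over $i,j$ of $\sigma_i^{-1}(\alpha_i(\sigma_i(r)))$ and $\sigma_i^{-1}(\chi_{ij}(\sigma_j(r)))$. Since $\alpha_i < \id$, we get $\sigma_i^{-1}(\alpha_i(\sigma_i(r))) < \sigma_i^{-1}(\sigma_i(r)) = r$ for $r>0$; and since $\Gamma$ is a contraction on $\sigma(\cdot)$, \eqref{eq:omega-path-contraction} gives $\chi_{ij}(\sigma_j(r)) \le \max_k \chi_{ik}(\sigma_k(r)) < \sigma_i(r)$, hence $\sigma_i^{-1}(\chi_{ij}(\sigma_j(r))) < r$ for $r>0$. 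As the maximum is over finitely many terms each strictly less than $r$, we conclude $\alpha(r) < r$ for all $r>0$.

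Finally I would invoke the already-cited fact (the remark following Lemma~\ref{lem:hyd-lya-alt}, referring to the proof of \cite[Proposition~2.7]{CaiTeel2009}) that a hybrid system possessing an ISS Lyapunov function — i.e.\ a candidate ISS Lyapunov function with $\varphi \in \PD$ and $\alpha < \id$ — is pre-ISS; applied to \eqref{eq:inter} with the function $V$, this gives pre-ISS w.r.t.\ $\A$. The main obstacle is the verification that $\alpha < \id$: it hinges on correctly exploiting the strict contraction \eqref{eq:omega-path-contraction} to dominate all the cross terms $\chi_{ij}(\sigma_j(r))$, and here the fact that $\alpha_i$ (and hence $\chi_{ij}$) are of class $\K$ rather than merely $\PD$ matters, exactly as flagged in footnote~\ref{ftnt:jump-rate} — without monotonicity one cannot pass from $\sigma_i(V(x)) \ge V_i(x_i)$ through $\alpha_i$, so the class-$\K$ hypothesis in Definition~\ref{dfn:hyd-lya} is what makes the argument go through cleanly.
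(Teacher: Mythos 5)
Your proposal is correct and follows essentially the same route as the paper: apply Theorem~\ref{thm:inter-lya} to get the candidate ISS Lyapunov function $V$, verify $\varphi\in\PD$ (from $\varphi_i\in\PD$ and positivity of $(\sigma_i^{-1})'$) and $\alpha<\id$ on $(0,\infty)$ (from $\alpha_i<\id$ and the contraction \eqref{eq:omega-path-contraction} giving $\sigma_i^{-1}\circ\chi_{ij}\circ\sigma_j<\id$), and then conclude pre-ISS via the argument of \cite[Proposition~2.7]{CaiTeel2009}. Your verification of $\alpha<\id$ is, if anything, slightly more explicit than the paper's, and no gap is present.
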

\begin{proof}
Following Theorem~\ref{thm:inter-lya}, the function $ V $ defined by \eqref{eq:inter-lya-fcn} is a candidate ISS Lyapunov function w.r.t. $ \A $ for \eqref{eq:inter}. First, as all $ \sigma_i \in \Kinf $ are smooth on $ (0, \infty) $ and $ \varphi_i \in \PD $, the function $ \varphi $ defined by \eqref{eq:inter-lya-flow-fcn} is of class $ \PD $. Second, \eqref{eq:omega-path-contraction} implies that all $ \sigma_i^{-1} \circ \chi_{ij} \circ \sigma_j < \id $ on $ (0, \infty) $, and as all $ \sigma_i \in \Kinf $ and $ \alpha_i < \id $ on $ (0, \infty) $, it follows that all $ \sigma_i^{-1} \circ \alpha_i \circ \sigma_i < \id $ on $ (0, \infty) $; thus the function $ \alpha $ defined by \eqref{eq:inter-lya-jump-fcn} satisfies that $ \alpha < \id $ on $ (0, \infty) $. Therefore, $ V $ is an ISS Lyapunov function, and \eqref{eq:inter} is pre-ISS w.r.t. $ \A $ following similar analysis to the proof of \cite[Proposition~2.7]{CaiTeel2009}; see also Remark~\ref{rmk:hyd-iss-kll}.
\end{proof}

As the assumptions in Corollary~\ref{cor:inter-iss} are quite restrictive, we now investigate the case where, for some subsystems $ \Sigma_i $, either $ \varphi_i \notin \PD $ or $ \alpha_i(r) \geq r $ for some $ r > 0 $ (cf. footnote~\ref{ftnt:partial-iss}). In this case, we cannot use Corollary~\ref{cor:inter-iss} to prove pre-ISS for the interconnection \eqref{eq:inter} directly, but rather invoke Proposition~\ref{prop:hyd-iss-partial} to establish pre-ISS for the set of solution pairs that jump neither too fast nor too slowly. However, in general, Theorem~\ref{thm:inter-lya} cannot provide the candidate exponential ISS Lyapunov function needed in Proposition~\ref{prop:hyd-iss-partial}. In the next theorem, we construct such a function under the assumption that each subsystem $ \Sigma_i $ admits a candidate exponential ISS Lyapunov function $ V_i $, and the internal gains $ \chi_{ij} $ in \eqref{eq:inter-sub-lya-gain} and \eqref{eq:inter-sub-lya-jump} are all linear. With a slight abuse of notation, we let all $ \chi_{ij} \geq 0 $ be scalars, and replace the terms $ \chi_{ij}(V_j(x_j)) $ in \eqref{eq:inter-sub-lya-gain} and \eqref{eq:inter-sub-lya-jump} with $ \chi_{ij} V_j(x_j) $. Consider the \mbox{\emph{gain matrix}} 
\begin{equation}\label{eq:gain-mx}
    \Gamma_M := (\chi_{ij})_{i,j=1}^{n} \in \R^{n \times n}.
\end{equation}
Denote by $ \rho(\Gamma_M) $ its spectral radius (i.e., the largest absolute value of its eigenvalues). Due to \cite[p.~110]{DashkovskiyRufferWirth2007}, if
\begin{equation}\label{eq:smg-mx}
    \rho(\Gamma_M) < 1,
\end{equation}
then the small-gain condition \eqref{eq:smg-dfn} holds for the function $ \bar\Gamma: \R^n_+ \to \R^n_+ $ defined by $ \bar\Gamma(v) := \Gamma_M v $, which is the alternative gain operator in Remark~\ref{rmk:gain-operator-sum}. Consequently, there exists a linear $ \Omega $-path w.r.t. $ \bar\Gamma $ \cite[p.~78]{DashkovskiyRufferWirth2006}; for more results on $ \Omega $-paths, the reader may consult \cite{Ruffer2010}.
\begin{Satz}\label{thm:inter-lya-lin}
Consider the interconnection \eqref{eq:inter}. Suppose that each subsystem $ \Sigma_i $ admits a candidate exponential ISS Lyapunov function $ V_i $ w.r.t. a set $ \A_i $ with rate coefficients $ c_i, d_i $. Assume also that the internal gains $ \chi_{ij} $ in \eqref{eq:inter-sub-lya-gain} and \eqref{eq:inter-sub-lya-jump} are all linear, and \eqref{eq:smg-mx} holds for the gain matrix $ \Gamma_M $ defined by \eqref{eq:gain-mx}. Let $ \sigma: r \mapsto (s_1 r, \ldots, s_n r) $ with scalars $ s_1, \ldots, s_n $ be a linear $ \Omega $-path w.r.t. the alternative gain operator $ \bar\Gamma $. Then $ V : \cX \to \R_+ $ defined by
\begin{equation}\label{eq:inter-lya-lin-fcn}
    V(x) := \max_{i=1}^{n} \dfrac{1}{s_i} V_i(x_i)
\end{equation}
is a candidate exponential ISS Lyapunov function w.r.t. $ \A $ for \eqref{eq:inter} with rate coefficients
\begin{equation}\label{eq:inter-lya-lin-rate-coef}
    c := \min_{i=1}^{n} c_i, \quad d := \min_{i, j: j \neq i} \!\bigg\{ d_i,\, -\ln \bigg( \dfrac{s_j}{s_i} \chi_{ij} \bigg) \!\bigg\}.
\end{equation}
\end{Satz}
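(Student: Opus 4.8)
The plan is to obtain Theorem~\ref{thm:inter-lya-lin} as a specialization of Theorem~\ref{thm:inter-lya} to the linear/exponential setting, and then simply read off the rate coefficients from the explicit formulas \eqref{eq:inter-lya-flow-fcn} and \eqref{eq:inter-lya-jump-fcn} for the constructed flow and jump rate functions.

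First I would verify that the given linear map $\sigma\colon r \mapsto (s_1 r, \ldots, s_n r)$ is an $\Omega$-path w.r.t.\ the gain operator $\Gamma$ of \eqref{eq:gain-operator} that is smooth on $(0,\infty)$, so that Theorem~\ref{thm:inter-lya} is applicable. Smoothness is immediate because each component is affine; the inverses $\sigma_i^{-1}(r) = r/s_i$ are globally Lipschitz with constant derivative $1/s_i > 0$, which gives conditions~1 and~2 of Definition~\ref{dfn:omega-path}. For the contraction property~\eqref{eq:omega-path-contraction}, I would use that $\sigma$ is by hypothesis an $\Omega$-path w.r.t.\ $\bar\Gamma$ and that $\Gamma(v) \leq \bar\Gamma(v)$ for all $v$ (Remark~\ref{rmk:gain-operator-sum}), whence $\Gamma(\sigma(r)) \leq \bar\Gamma(\sigma(r)) < \sigma(r)$ for all $r > 0$. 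I would also note that writing the scalars $\chi_{ij}$ in place of the internal gain functions is consistent with the equivalent subsystem formulation of Remark~\ref{rmk:inter-sub-lya}, so each $V_i$ satisfies \eqref{eq:inter-sub-lya-bnd}--\eqref{eq:inter-sub-lya-jump} with $\varphi_i(r) = c_i r$, $\alpha_i(r) = e^{-d_i} r \in \K$, and linear $\chi_{ij}$.

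Theorem~\ref{thm:inter-lya} then yields that $V(x) = \max_i \sigma_i^{-1}(V_i(x_i)) = \max_i \tfrac{1}{s_i} V_i(x_i)$, i.e.\ the function \eqref{eq:inter-lya-lin-fcn}, is a candidate ISS Lyapunov function w.r.t.\ $\A$ whose flow rate $\varphi$ is given by \eqref{eq:inter-lya-flow-fcn} and whose jump rate $\alpha$ is given by \eqref{eq:inter-lya-jump-fcn}. The remaining step is to substitute the linear data: from $(\sigma_i^{-1})'(\cdot) \equiv 1/s_i$, $\sigma_i(r) = s_i r$ and $\varphi_i(r) = c_i r$ I get $\varphi(r) = \min_i \tfrac{1}{s_i}\, c_i\, (s_i r) = (\min_i c_i)\, r = c r$; and from $\alpha_i(r) = e^{-d_i} r$, $\chi_{ij}(r) = \chi_{ij} r$ (recall $\chi_{ii} \equiv 0$) I get $\alpha(r) = \max_{i,j\colon j \neq i}\{ e^{-d_i},\, \tfrac{s_j}{s_i}\chi_{ij}\}\, r = e^{-d} r$, since $e^{-d} = \max_{i,j\colon j\neq i}\{e^{-d_i}, \tfrac{s_j}{s_i}\chi_{ij}\}$ by the definition of $d$ in \eqref{eq:inter-lya-lin-rate-coef} (with the convention that a vanishing $\chi_{ij}$ contributes $-\ln(\cdot) = +\infty$ to the minimum, i.e.\ drops out of the maximum defining $\alpha$). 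By \eqref{eq:hyd-lya-dfn-rate-coef} this is exactly the claim that $V$ is a candidate exponential ISS Lyapunov function w.r.t.\ $\A$ with rate coefficients $c, d$.

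These substitutions are routine; the only place demanding care — and the step I would write out most carefully — is the reduction itself, i.e.\ confirming that the linear $\Omega$-path supplied for the sum-type operator $\bar\Gamma$ is a bona fide smooth $\Omega$-path for the max-type operator $\Gamma$ used in Theorem~\ref{thm:inter-lya}, and that linearity of the $\chi_{ij}$ together with the exponential form of $\varphi_i$ and $\alpha_i$ is preserved under the $\min$/$\max$ aggregations in \eqref{eq:inter-lya-flow-fcn}--\eqref{eq:inter-lya-jump-fcn} — which holds precisely because scalar multiplication commutes with $\min$ and $\max$. No regularity beyond what Theorem~\ref{thm:inter-lya} already guarantees is needed, since $V$ inherits local Lipschitz continuity outside $\A$ from that theorem.
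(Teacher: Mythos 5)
Your proposal is correct and follows essentially the same route as the paper's proof: invoke Remark~\ref{rmk:gain-operator-sum} to see that the linear $\Omega$-path for $\bar\Gamma$ is also a (trivially smooth) $\Omega$-path for $\Gamma$, apply Theorem~\ref{thm:inter-lya}, and then substitute the linear data into \eqref{eq:inter-lya-flow-fcn} and \eqref{eq:inter-lya-jump-fcn} to read off $\varphi(r)=cr$ and $\alpha(r)=e^{-d}r$. Your additional care with the $\Omega$-path verification and the convention for vanishing $\chi_{ij}$ is fine but not a departure from the paper's argument.
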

\begin{proof}
In view of Remark~\ref{rmk:gain-operator-sum}, $ \sigma $ is also an $ \Omega $-path w.r.t. the gain operator defined by \eqref{eq:gain-operator} (with all $ \chi_{ij}(r_j) $ replaced by $ \chi_{ij} r_j $). Following Theorem~\ref{thm:inter-lya}, the function $ V $ defined by \eqref{eq:inter-lya-lin-fcn} is a candidate ISS Lyapunov function w.r.t. $ \A $ for \eqref{eq:inter}. Substituting \eqref{eq:inter-sub-lay-rate-coef} into \eqref{eq:inter-lya-flow-fcn} and \eqref{eq:inter-lya-jump-fcn}, we obtain
\begin{equation*}
    \varphi(r) \equiv \min_{i=1}^{n} c_i r, \quad \alpha(r) \equiv \max_{i, j = 1}^{n} \!\bigg\{ e^{-d_i},\, \dfrac{s_j}{s_i}\chi_{ij} \bigg\} r.
\end{equation*}
Hence $ V $ is a candidate exponential ISS Lyapunov function with the rate coefficients $ c, d $ defined by \eqref{eq:inter-lya-lin-rate-coef}.
\end{proof}
\begin{Bemerkung}\label{rmk:power-fcn}
For the more general case with the internal gains $ \chi_{ij} $ being power functions instead of linear ones, a candidate exponential ISS Lyapunov function for \eqref{eq:inter} can be constructed in a similar way; cf. \cite[Theorem~9]{DashkovskiyMironchenko2013SICON}.
\end{Bemerkung}

The following remark provides a simpler bound for the rate coefficient $ d $ in some important cases.
\begin{Bemerkung}
If the gain matrix $ \Gamma_M $ defined by \eqref{eq:gain-mx} is irreducible, then $ \rho(\Gamma_M) $ is the Perron--Frobenius eigenvalue of $ \Gamma_M $, and the corresponding eigenvector $ \bar s = (s_1, \ldots, s_n) $ satisfies $ \bar s > 0 $ (Perron--Frobenius theorem \cite[Theorem~2.1.3]{BermanPlemmons1994}). Hence, if \eqref{eq:smg-mx} holds, then $ \Gamma_M \bar s = \rho(\Gamma_M) \bar s < \bar s$; thus $ \sigma: r \mapsto \bar s r $ is a linear $ \Omega $-path as in Theorem~\ref{thm:inter-lya-lin}. Moreover, for all $ i \in \{1, \ldots, n\} $, it holds that
\begin{center}
$\max_{j=1}^{n} \dfrac{s_j}{s_i} \chi_{ij} \leq \dfrac{1}{s_i} \sum_{j=1}^{n} s_j \chi_{ij} = \rho(\Gamma_M);$
\end{center}
thus the rate coefficient $ d $ defined by \eqref{eq:inter-lya-lin-rate-coef} satisfies that $ d \geq \min \{\min_{i=1}^{n} d_i,\, -\ln(\rho(\Gamma_M))\} $.
\end{Bemerkung}

Having applied Theorem~\ref{thm:inter-lya-lin}, we can establish pre-ISS for the set of solution pairs that jump neither too fast nor too slowly via Proposition~\ref{prop:hyd-iss-partial}. However, if there are subsystems $ \Sigma_k, \Sigma_l $ for which the rate coefficients $ c_k, d_l < 0 $, then $ c, d $ defined by \eqref{eq:inter-lya-lin-rate-coef} are negative as well, and Proposition~\ref{prop:hyd-iss-partial} cannot be applied to complete solution pairs (see Remark~\ref{rmk:hyd-iss-partial-marginal}). In the following section, we handle such cases via the approach of modifying ISS Lyapunov functions for subsystems using ADT and RADT clocks from \cite{LiberzonNesicTeel2014}.

\section{Modifying ISS Lyapunov functions for subsystems}\label{sec:aug}
Suppose that each subsystem $ \Sigma_i $ admits a candidate exponential ISS Lyapunov function with rate coefficients $ c_i, d_i $, and there are $ \Sigma_k, \Sigma_l $ such that $ c_k, d_l < 0 < c_l, d_k $. Our goal is to construct new candidate exponential ISS Lyapunov functions with rate coefficients $ \tilde c_i, \tilde d_i $ so that either all $ \tilde c_i > 0 $ (i.e., all continuous dynamics are ISS) or all $ \tilde d_i > 0 $ (i.e., all discrete dynamics are ISS). To accomplish this, we first derive suitable conditions on the density of jumps, then augment the corresponding subsystems with auxiliary clocks to incorporate such conditions, and finally modify the corresponding candidate exponential ISS Lyapunov functions.

\subsection{Making discrete dynamics ISS}\label{ssec:aug-adt}
In the following, we construct candidate exponential ISS Lyapunov functions so that all rate coefficients $ \tilde d_i > 0 $.

We say that a solution pair $ (x, u) $ of \eqref{eq:inter} admits an average dwell-time (ADT) \cite{HespanhaMorse1999} $ \delta > 0 $ if there is an integer $ N_0 \geq 1 $ so that all $ (s, k) \preceq (t, j) $ in $ \dom x $ satisfy\footnote{If \eqref{eq:adt} holds with $ N_0 = 1 $, then the ADT condition becomes the dwell-time condition \cite{Morse1996}; if it holds with $ N_0 < 1 $, then jumps are not allowed at all, which can be seen directly from \eqref{eq:adt} by taking $ t - s $ small enough.}
\begin{equation}\label{eq:adt}
    j - k \leq \delta (t - s) + N_0.
\end{equation}
Following \cite[Section~IV.A]{LiberzonNesicTeel2014}, a hybrid time domain satisfies \eqref{eq:adt} iff it is the domain of an ADT clock $ \tau $ given by
\begin{equation}\label{eq:adt-clock}
\begin{aligned}
    &\dot\tau \in [0, \delta], &\qquad &\tau \in [0, N_0], \\
    &\tau^+ = \tau - 1, &\qquad &\tau_i \in [1, N_0].
\end{aligned}
\end{equation}

\begin{Bemerkung}\label{rmk:adt-clock-dfn}
This notion of ADT clock for hybrid systems first appeared in \cite[Appendix]{CaiTeelGoebel2008}, where it was defined by
\begin{equation}\label{eq:adt-clock-alt}
\begin{cases}
    \dot\tau \in \eta_\delta(\tau) &\text{for } \tau \in C := [0, N_0] \\
    \tau^+ = \tau - 1 &\text{for } \tau \in D := [1, N_0]
\end{cases}
\end{equation}
with \quad
$\eta_\delta(\tau) :=
    \begin{cases}
        \delta &\text{for } \tau \in [0, N_0) \\
        [0, \delta] &\text{for } \tau = N_0
    \end{cases}$\\
(see also \cite{MitraLiberzonLynch2008} for a related earlier construction). The ADT clocks defined by \eqref{eq:adt-clock} and \eqref{eq:adt-clock-alt} are equivalent in the following sense. First, as $ \tau \in [0, \delta] $, an ADT clock defined by \eqref{eq:adt-clock-alt} always satisfies \eqref{eq:adt-clock}. Second, given an ADT clock defined by \eqref{eq:adt-clock} that increases on $ [0, N_0) $ with a speed $ \dot\tau < \delta $, there always exists an ADT clock defined by \eqref{eq:adt-clock-alt} that increases on $ [0, N_0) $ with $ \dot\tau = \delta $ but stays longer at $ N_0 $ so that their hybrid time domains are the same.
\end{Bemerkung}

Denote by $ I_d := \{i : d_i < 0\} $ the index set of subsystems with non-ISS discrete dynamics. Let $ z_i := x_i \in \cX_i =: \cZ_i $ for $ i \notin I_d $ and $ z_i := (x_i, \tau_i) \in \cX_i \times [0, N_{0i}] =: \cZ_i $ with an integer $ N_{0i} \geq 1 $ for $ i \in I_d $. Consider the augmented interconnection $ \tilde\Sigma $ with state $ z := (z_1, \ldots, z_n) \in \cZ_1 \times \cdots \times \cZ_n =: \cZ $ and input $ u \in \cU $ modeled by
\begin{equation}\label{eq:aug-inter}
\begin{aligned}
    &\dot z_i \in \tilde F_i(z, u),\quad i = 1, \ldots, n, &\qquad &(z, u) \in \tilde\cC, \\
    &z_i^+ \in \tilde G_i(z, u),\quad i = 1, \ldots, n, &\qquad &(z, u) \in \tilde\cD,
\end{aligned}
\end{equation}
where $ \tilde\cC := \tilde\cC_1 \times \cdots \times \tilde\cC_n \times \cC_u $ with $ \tilde\cC_i := \cC_i $ for $ i \notin I_d $ and $ \tilde\cC_i := \cC_i \times [0, N_{0i}] $ for $ i \in I_d $, $ \tilde\cD := \tilde\cD_1 \times \cdots \times \tilde\cD_n \times \cD_u $ with $ \tilde\cD_i := \cD_i $ for $ i \notin I_d $ and $ \tilde\cD_i := \cD_i \times [1, N_{0i}] $ for $ i \in I_d $, $ \tilde F := (\tilde F_1, \ldots, \tilde F_n) $ with $ \tilde F_i(z, u) := F_i(x, u) $ for $ i \notin I_d $ and $ \tilde F_i(z, u) := F_i(x, u) \times [0, \delta_i] $ for $ i \in I_d $, and $ \tilde G := (\tilde G_1, \ldots, \tilde G_n) $ with $ \tilde G_i(z, u) := G_i(x, u) $ for $ i \notin I_d $ and $ \tilde G_i(z, u) := G_i(x, u) \times \{\tau_i - 1\} $ for $ i \in I_d $. Then \eqref{eq:aug-inter} is a hybrid system with the data $ \tilde\cH := (\tilde F, \tilde G, \tilde\cC, \tilde\cD, \cZ, \cU) $. The dynamics of $ z_i $ is called the $ i $-th augmented subsystem of \eqref{eq:aug-inter} and is denoted by $ \tilde\Sigma_i $.

In the following proposition, we apply the modification technique from \cite[Proposition~IV.1]{LiberzonNesicTeel2014} to construct a candidate exponential ISS Lyapunov function for each augmented subsystem $ \tilde\Sigma_i $ based on the candidate exponential ISS Lyapunov function for the subsystem $ \Sigma_i $ of the original interconnection \eqref{eq:inter} and the ADT clock $ \tau_i $.
\begin{Aussage}\label{prop:aug-inter-adt-sub-lya}
Consider a subsystem $ \Sigma_i $ of the original interconnection \eqref{eq:inter}. Suppose that it admits a candidate exponential ISS Lyapunov function $ V_i $ w.r.t. a set $ \A_i $ with rate coefficients $ c_i, d_i $. For a scalar $ L_i \geq 0 $, the function $ W_i: \cZ_i \to \R_+ $ defined by
\begin{equation*}
    W_i(z_i) :=
    \begin{cases}
        V_i(x_i) &\text{if } i \notin I_d; \\
        e^{L_i \tau_i} V_i(x_i) &\text{if } i \in I_d
    \end{cases}
\end{equation*}
is a candidate exponential ISS Lyapunov function w.r.t. 
\begin{equation*}
    \tilde\A_i :=
    \begin{cases}
        \A_i &\text{if } i \notin I_d; \\
        \A_i \times [0, N_{0i}] &\text{if } i \in I_d
    \end{cases}
\end{equation*}
for the subsystem $ \tilde\Sigma_i $ of \eqref{eq:aug-inter} with rate coefficients
\begin{equation}\label{eq:aug-inter-adt-sub-lya-rate-coef}
    \begin{cases}
        \tilde c_i := c_i,\, \tilde d_i := d_i &\text{if } i \notin I_d; \\
        \tilde c_i := c_i - L_i \delta_i,\, \tilde d_i := d_i + L_i &\text{if } i \in I_d.
    \end{cases}
\end{equation}
More specifically,
\begin{enumerate}[1.]
    \item there exist functions $ \tilde\psi_{i1}, \tilde\psi_{i2} \in \Kinf $ such that
        \begin{equation}\label{eq:aug-inter-adt-sub-lya-bnd}
            \tilde\psi_{i1}(|z_i|_{\tilde\A_i}) \leq W_i(z_i) \leq \tilde\psi_{i2}(|z_i|_{\tilde\A_i}) \quad \forall z_i \in \cZ_i;
        \end{equation}
    \item there exist internal gains $ \tilde\chi_{ij} \in \K,\, j \neq i $ defined by
        \begin{equation}\label{eq:aug-inter-adt-sub-lya-gain-fcn-int}
            \tilde\chi_{ij}(r) :=
            \begin{cases}
                \chi_{ij}(r) &\text{if } i \notin I_d; \\
                e^{L_i N_{0i}} \chi_{ij}(r) &\text{if } i \in I_d
            \end{cases}
        \end{equation}
        with $ \chi_{ij} $ as in \eqref{eq:inter-sub-lya-gain} and $ \tilde\chi_{ii} \equiv 0 $, and an external gain $ \tilde\chi_i \in \K $ such that for all $ (z, u) \in \tilde\cC $ with $ z_i \notin \tilde\A_i $,
        \begin{equation}\label{eq:aug-inter-adt-sub-lya-gain}
            W_i(z_i) \geq \max \!\Big\{ \!\max_{j=1}^{n} \tilde\chi_{ij}(W_j(z_j)),\, \tilde\chi_i(|u|) \Big\}
        \end{equation}
        implies that
        \begin{equation}\label{eq:aug-inter-adt-sub-lya-flow}
            \dot W_i(z_i; y_i) \leq -\tilde c_i W_i(z_i) \qquad \forall y_i \in \tilde F_i(z, u);
        \end{equation}
    \item for all $ (z, u) \in \tilde\cD $,
        \begin{multline}\label{eq:aug-inter-adt-sub-lya-jump}
            W_i(y_i) \leq \max \!\Big\{ e^{-\tilde d_i} W_i(z_i),\, \max_{j=1}^{n} \tilde\chi_{ij}(W_j(z_j)), \\
            \tilde\chi_i(|u|) \Big\} \qquad \forall y_i \in \tilde G_i(z, u).
        \end{multline}
\end{enumerate}
\end{Aussage}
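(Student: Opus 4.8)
The plan is to reduce everything, subsystem by subsystem, to the exponential-case version of the characterization in Remark~\ref{rmk:inter-sub-lya}, essentially repeating the weighting trick of \cite[Proposition~IV.1]{LiberzonNesicTeel2014} while keeping track of how the internal gains are rescaled. For $ i \notin I_d $ nothing happens: $ \tilde\Sigma_i = \Sigma_i $, $ W_i = V_i $, $ \tilde\A_i = \A_i $, and one simply copies the data of $ V_i $, taking $ \tilde\psi_{ik} := \psi_{ik} $, $ \tilde\chi_{ij} := \chi_{ij} $, $ \tilde\chi_i := \chi_i $, so that items~1--3 and the rate coefficients become exactly the conditions of Remark~\ref{rmk:inter-sub-lya} for $ V_i $. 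So I will concentrate on $ i \in I_d $, where $ z_i = (x_i, \tau_i) $ with $ \tau_i \in [0, N_{0i}] $ and $ W_i(z_i) = e^{L_i \tau_i} V_i(x_i) $.

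First I would dispose of the bounds and the regularity. Since $ \tau_i $ ranges over all of $ [0, N_{0i}] $, one has $ |z_i|_{\tilde\A_i} = |x_i|_{\A_i} $ and $ 1 \le e^{L_i \tau_i} \le e^{L_i N_{0i}} $, so multiplying \eqref{eq:inter-sub-lya-bnd} through by $ e^{L_i \tau_i} $ yields \eqref{eq:aug-inter-adt-sub-lya-bnd} with $ \tilde\psi_{i1} := \psi_{i1} $ and $ \tilde\psi_{i2} := e^{L_i N_{0i}} \psi_{i2} $, both of class $ \Kinf $. Moreover $ z_i \notin \tilde\A_i $ is equivalent to $ x_i \notin \A_i $, and near such points $ V_i $ is locally Lipschitz while $ z_i \mapsto e^{L_i \tau_i} $ is smooth, so $ W_i $ is locally Lipschitz outside $ \tilde\A_i $.

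Next come the flow and jump clauses. I would set the external gain $ \tilde\chi_i := e^{L_i N_{0i}} \chi_i \in \K $. For the flow clause, assume $ (z,u) \in \tilde\cC $, $ z_i \notin \tilde\A_i $, and the left-hand side of \eqref{eq:aug-inter-adt-sub-lya-gain}. Using $ W_j(z_j) \ge V_j(x_j) $ for every $ j $, the monotonicity of the $ \chi_{ij}, \chi_i $, and $ \tau_i \le N_{0i} $, I peel off the factor $ e^{L_i N_{0i}} $ and recover \eqref{eq:inter-sub-lya-gain} for $ V_i $, hence $ \dot V_i(x_i; y_i') \le -c_i V_i(x_i) $ for all $ y_i' \in F_i(x,u) $. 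Writing a generic $ y_i \in \tilde F_i(z,u) $ as $ (y_i', \nu) $ with $ \nu \in [0, \delta_i] $ and $ W_i(z_i + h y_i) = e^{L_i(\tau_i + h\nu)} V_i(x_i + h y_i') $, a product-rule expansion of the difference quotient gives $ \dot W_i(z_i; y_i) = e^{L_i \tau_i}\dot V_i(x_i; y_i') + L_i \nu\, e^{L_i \tau_i} V_i(x_i) \le (-c_i + L_i \nu) W_i(z_i) \le -\tilde c_i W_i(z_i) $, using $ W_i(z_i) \ge 0 $, $ L_i \ge 0 $ and $ \nu \le \delta_i $; this is \eqref{eq:aug-inter-adt-sub-lya-flow}. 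For the jump clause, membership in $ \tilde\cD $ forces $ (x,u) \in \cD $ and $ \tau_i \in [1, N_{0i}] $, and every $ y_i \in \tilde G_i(z,u) $ equals $ (y_i', \tau_i - 1) $ with $ y_i' \in G_i(x,u) $; applying \eqref{eq:inter-sub-lya-jump} with $ \alpha_i(r) = e^{-d_i} r $ and multiplying through by $ e^{L_i(\tau_i - 1)} $ turns the $ e^{-d_i} V_i(x_i) $ term into $ e^{-(d_i+L_i)} W_i(z_i) = e^{-\tilde d_i} W_i(z_i) $ and, since $ e^{L_i(\tau_i - 1)} \le e^{L_i N_{0i}} $ and the gains $ \chi_{ij} $ are increasing with $ W_j(z_j) \ge V_j(x_j) $, bounds the remaining two terms by $ \tilde\chi_{ij}(W_j(z_j)) $ (with $ \tilde\chi_{ij} $ as in \eqref{eq:aug-inter-adt-sub-lya-gain-fcn-int}) and $ \tilde\chi_i(|u|) $ respectively, which is \eqref{eq:aug-inter-adt-sub-lya-jump}. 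Finally, items~1--3 with $ \varphi_i(r) \equiv \tilde c_i r $ and $ \alpha_i(r) \equiv e^{-\tilde d_i} r $ are precisely the conditions of Remark~\ref{rmk:inter-sub-lya} for $ \tilde\Sigma_i $, so $ W_i $ is a candidate exponential ISS Lyapunov function w.r.t. $ \tilde\A_i $ with rate coefficients $ \tilde c_i, \tilde d_i $.

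I expect the only genuinely delicate point to be the Dini-derivative product rule used for $ \dot W_i $ along flows: one must justify that $ \limsup_{h \searrow 0}(c_h d_h) = c\,\limsup_{h \searrow 0} d_h $ when $ c_h \to c > 0 $ with $ (d_h) $ bounded (here $ c_h = e^{L_i(\tau_i + h\nu)} $ and $ d_h $ is the difference quotient of $ V_i $, bounded by local Lipschitzness), together with the fact that $ h \mapsto e^{L_i(\tau_i + h\nu)} $ is genuinely differentiable. This is the same device already invoked for the factor $ (\sigma_i^{-1})' $ in the proof of Theorem~\ref{thm:inter-lya}; everything else is bookkeeping with the constant $ e^{L_i N_{0i}} $ and monotonicity of the gains.
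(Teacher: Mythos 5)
Your proposal is correct and follows essentially the same route as the paper's own proof: handle $i \notin I_d$ trivially, take $\tilde\psi_{i1}=\psi_{i1}$, $\tilde\psi_{i2}=e^{L_i N_{0i}}\psi_{i2}$, $\tilde\chi_i = e^{L_i N_{0i}}\chi_i$, use $W_j \geq V_j$ and $\tau_i \leq N_{0i}$ to pass from \eqref{eq:aug-inter-adt-sub-lya-gain} back to \eqref{eq:inter-sub-lya-gain}, differentiate $e^{L_i\tau_i}V_i$ along $\tilde F_i$ to get $\tilde c_i = c_i - L_i\delta_i$, and multiply \eqref{eq:inter-sub-lya-jump} by $e^{L_i(\tau_i-1)}$ to get $\tilde d_i = d_i + L_i$. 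Your explicit justification of the Dini-derivative product rule is a detail the paper states without comment, so nothing further is needed.
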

\begin{proof}
If $ i \notin I_d $, then the claim follows directly from the assumption that $ V_i $ is a candidate exponential ISS Lyapunov function with rate coefficients $ c_i, d_i $. Therefore, we only consider the case $ i \in I_d $ in the following proof. As $ V_i $ is locally Lipschitz outside $ \A_i $ and the map $ \tau_i \mapsto e^{L_i \tau_i} $ is smooth, $ W_i $ is also locally Lipschitz outside $ \tilde\A_i $.

First, consider the functions $ \tilde\psi_{i1}, \tilde\psi_{i2} \in \Kinf $ defined by
$$ \tilde\psi_{i1}(r) := \psi_{i1}(r), \quad \tilde\psi_{i2}(r) := e^{L_i N_{0i}} \psi_{i2}(r) $$
with $ \psi_{i1}, \psi_{i2} $ as in \eqref{eq:inter-sub-lya-bnd}. Then \eqref{eq:aug-inter-adt-sub-lya-bnd} follows from \eqref{eq:inter-sub-lya-bnd}.

Second, consider the function $ \tilde\chi_i \in \K $ defined by
\begin{equation}\label{eq:aug-inter-adt-sub-lya-gain-fcn-ext}
    \tilde\chi_i(r) := e^{L_i N_{0i}} \chi_i(r)
\end{equation}
with $ \chi_i $ as in \eqref{eq:inter-sub-lya-gain}. For each $ (z, u) \in \tilde\cC $ with $ z_i \notin \tilde\A_i $, if \eqref{eq:aug-inter-adt-sub-lya-gain} holds, then
\begin{align*}
    &V_i(x_i) = e^{-L_i \tau_i} W_i(z_i) \geq e^{-L_i N_{0i}} \max_{j=1}^{n} \tilde\chi_{ij}(W_j(z_j)) \\
    &= \max_{j=1}^{n} \chi_{ij}(W_j(z_j)) \geq \max_{j=1}^{n} \chi_{ij}(V_j(x_j)),
\end{align*}
and $ V_i(x_i) = e^{-L_i \tau_i} W_i(z_i) \geq e^{-L_i N_{0i}} \tilde\chi_i(|u|) = \chi_i(|u|) $. Hence \eqref{eq:inter-sub-lya-gain}, and therefore \eqref{eq:inter-sub-lya-flow}, holds. For all $ y_i \in \tilde F_i(z, u) $, let $ y_i = (y_{i1}, y_{i2}) $ be such that $ y_{i1} \in F_i(x, u) $ and $ y_{i2} \in [0, \delta_i] $. Following \eqref{eq:inter-sub-lya-flow}, \eqref{eq:inter-sub-lay-rate-coef}, and \eqref{eq:aug-inter-adt-sub-lya-rate-coef},
\begin{align*}
    &\dot W_i(z_i; y_i) = e^{L_i \tau_i} \dot V_i(x_i; y_{i1}) + L_i e^{L_i \tau_i} V_i(x_i)\, y_{i2} \\
    &\leq -c_i e^{L_i \tau_i} V_i (x_i) + L_i \delta_i e^{L_i \tau_i} V_i(x_i) = -\tilde c_i W_i(z_i).
\end{align*}
Finally, consider an arbitrary $ (z, u) \in \tilde\cD $. For all $ y_i \in \tilde G_i(z, u) $, let $ y_i = (y_{i1}, y_{i2}) $ be such that $ y_{i1} \in G_i(x, u) $ and $ y_{i2} = \tau_i - 1 $. From \eqref{eq:aug-inter-adt-sub-lya-rate-coef}, it follows that
\begin{equation*}
    e^{-\tilde d_i} W_i(z_i) = e^{-d_i - L_i + L_i \tau_i} V_i(x_i) = e^{L_i y_{i2} - d_i} V_i(x_i),
\end{equation*}
and from \eqref{eq:aug-inter-adt-sub-lya-gain-fcn-int} and \eqref{eq:aug-inter-adt-sub-lya-gain-fcn-ext}, it follows that $ \tilde\chi_{ij}(W_j(z_j)) = e^{L_i N_{0i}} \chi_{ij} (W_j(z_j)) \geq e^{L_i y_{i2}} \chi_{ij}(V_j(x_j)) $ for all $ j $ and $ \tilde\chi_i(|u|) = e^{L_i N_{0i}} \chi_i(|u|) \geq e^{L_i y_{i2}} \chi_i(|u|) $, respectively. Substituting the previous equations into \eqref{eq:inter-sub-lya-jump} gives \eqref{eq:aug-inter-adt-sub-lya-jump}.

Therefore, $ W_i $ is a candidate exponential ISS Lyapunov function w.r.t. $ \tilde\A_i $ for the augmented subsystem $ \tilde\Sigma_i $ of \eqref{eq:aug-inter} with the rate coefficients $ \tilde c_i, \tilde d_i $ defined by \eqref{eq:aug-inter-adt-sub-lya-rate-coef}.
\end{proof}
Proposition~\ref{prop:aug-inter-adt-sub-lya} shows that it is possible to make all $ \tilde d_i > 0 $ by choosing large enough scalars $ L_i,\, i \in I_d $, at the cost of decreasing the convergence rates of continuous dynamics (as $ \tilde c_i = c_i - L_i \delta_i $ in \eqref{eq:aug-inter-adt-sub-lya-rate-coef} above), and increasing the internal gains (as $ \tilde\chi_{ij}(r) = e^{L_i N_{0i}} \chi_{ij}(r) $ in \eqref{eq:aug-inter-adt-sub-lya-gain-fcn-int} above). Consequently, for large enough integers $ N_{0i} $, it is possible that the small-gain condition \eqref{eq:smg-dfn} holds for the gain operator $ \Gamma $ defined by \eqref{eq:gain-operator}, but not for $ \tilde\Gamma: \R_+^n \to \R_+^n $ defined by\footnote{However, if all the original internal gains $ \chi_{ij} $ are linear, and the gain matrix $ \Gamma_M $ defined by \eqref{eq:gain-mx} is a triangular matrix (i.e., if \eqref{eq:inter} is a cascade interconnection), then \eqref{eq:smg-dfn} always holds for $ \tilde\Gamma $, as all the cyclic gains equal zero.}
\begin{equation*}
    \tilde\Gamma(r_1, \ldots, r_n) := \Big( \max_{j=1}^{n} \tilde\chi_{1j}(r_j), \ldots, \max_{j=1}^{n} \tilde\chi_{nj}(r_j) \Big).
\end{equation*}
To see the consequence of this fact clearer, consider for simplicity an interconnection of two subsystems $ \Sigma_1, \Sigma_2 $, and their candidate exponential ISS Lyapunov functions $ V_1, V_2 $ with rate coefficients $ c_1, d_2 > 0 > d_1, c_2 $ and linear internal gains $ \chi_{12}, \chi_{21} > 0 $. After we augment $ \Sigma_1 $ with an ADT clock $ \delta_1 \in [0, N_{01}] $, the matrix $ \tilde\Gamma_M $ is given by
\begin{equation*}
    \tilde\Gamma_M =
    \begin{bmatrix}
        0 & \tilde\chi_{12} \\
        \tilde\chi_{21} & 0
    \end{bmatrix}
    =
    \begin{bmatrix}
        0 & e^{L_1 N_{01}} \chi_{12} \\
        \chi_{21} & 0
    \end{bmatrix},
\end{equation*}
and $ \rho(\tilde\Gamma_M) < 1 $ holds iff $ \chi_{12} \chi_{21} < e^{-L_1 N_{01}} $. In order to make the rate coefficient $ \tilde d_1 = d_1 + L_1 > 0 $, we need to choose a scalar $ L_1 > - d_1 $. Also, the integer $ N_{01} \geq 1 $. Hence we cannot apply Theorem~\ref{thm:inter-lya-lin} to the augmented interconnection unless the original internal gains $ \chi_{12}, \chi_{21} $ satisfy $\chi_{12} \chi_{21} \leq e^{d_1} < 1$.

The observation above hints that it may be better to make all $ \tilde c_i > 0 $ (instead of making all $ \tilde d_i > 0 $ as in this subsection). See \cite{YangLiberzonMironchenko2016} for a case-by-case study comparing the two schemes.

\subsection{Making continuous dynamics ISS}\label{ssec:aug-radt}
In the following, we construct candidate exponential ISS Lyapunov functions so that all rate coefficients $ \tilde c_i > 0 $.

We say that a solution pair $ (x, u) $ of \eqref{eq:inter} admits a reverse average dwell-time (RADT) \cite{HespanhaLiberzonTeel2008} $ \delta^* > 0 $ if there is an integer $ N_0^* \geq 1 $ so that all $ (s, k) \preceq (t, j) $ in $ \dom x $ satisfy
\begin{equation}\label{eq:radt}
    t - s \leq \delta^* (j - k) + N_0^* \delta^*.
\end{equation}
Following \cite[Appendix]{CaiTeelGoebel2008} and \cite[Section~IV.B]{LiberzonNesicTeel2014}, a hybrid time domain satisfies \eqref{eq:radt} iff it is the domain of an RADT clock $ \tau $ defined by
\begin{equation*}
\begin{aligned}
    &\dot\tau = 1, &\qquad &\tau \in [0, N_0^* \delta^*], \\
    &\tau^+ = \max\{0,\, \tau - \delta^*\}, &\qquad &\tau \in [0, N_0^* \delta^*].
\end{aligned}
\end{equation*}
Denote by $ I_c := \{i : c_i < 0\} $ the index set of subsystems with non-ISS continuous dynamics. Let $ z_i := x_i \in \cX_i =: \cZ_i $ for $ i \notin I_c $ and $ z_i := (x_i, \tau_i) \in \cX_i \times [0, N_{0i}^* \delta_i^*] =: \cZ_i $ with an integer $ N_{0i} \geq 1 $ for $ i \in I_c $. Consider the augmented interconnection $ \tilde\Sigma $ with state $ z := (z_1, \ldots, z_n) \in \cZ_1 \times \cdots \times \cZ_n =: \cZ $ and input $ u \in \cU $ modeled by \eqref{eq:aug-inter}, where $ \tilde\cC := \tilde\cC_1 \times \cdots \times \tilde\cC_n \times \cC_u $ with $ \tilde\cC_i = \cC_i $ for $ i \notin I_c $ and $ \tilde\cC_i = \cC_i \times [0, N_{0i}^* \delta_i^*] $ for $ i \in I_c $, $ \tilde\cD := \tilde\cD_1 \times \cdots \times \tilde\cD_n \times \cD_u $ with $ \tilde\cD_i = \cD_i $ for $ i \notin I_c $ and $ \tilde\cD_i = \cD_i \times [0, N_{0i}^* \delta_i^*] $ for $ i \in I_c $, $ \tilde F := (\tilde F_1, \ldots, \tilde F_n) $ with $ \tilde F_i(z, u) := F_i(x, u) $ for $ i \notin I_c $ and $ \tilde F_i(z, u) := F_i(x, u) \times \{1\} $ for $ i \in I_c $, and $ \tilde G := (\tilde G_1, \ldots, \tilde G_n) $ with $ \tilde G_i(z, u) := G_i(x, u) $ for $ i \notin I_c $ and $ \tilde G_i(z, u) := G_i(x, u) \times \{\max\{0,\, \tau_i - \delta_i^*\}\} $ for $ i \in I_c $. Then \eqref{eq:aug-inter} is a hybrid system with the data $ \tilde\cH := (\tilde F, \tilde G, \tilde\cC, \tilde\cD, \cZ, \cU) $. The dynamics of $ z_i $ is called the $ i $-th augmented subsystem of \eqref{eq:aug-inter} and is denoted by $ \tilde\Sigma_i $.

In the following proposition, we apply the modification technique from \cite[Proposition~IV.4]{LiberzonNesicTeel2014} to construct a candidate exponential ISS Lyapunov functions for each augmented subsystem $ \tilde\Sigma_i $ based on the candidate exponential ISS Lyapunov function for the subsystem $ \Sigma_i $ of the original interconnection \eqref{eq:inter} and the RADT clock $ \tau_i $.
\begin{Aussage}\label{prop:aug-inter-radt-sub-lya}
Consider a subsystem $ \Sigma_i $ of the original interconnection \eqref{eq:inter}. Suppose that it admits a candidate exponential ISS Lyapunov function $ V_i $ w.r.t. a set $ \A_i $ with rate coefficients $ c_i, d_i $. For a scalar $ L_i \geq 0 $, the function $ W_i: \cZ_i \to \R_+ $ defined by
\begin{equation}
    W_i(z_i) :=
    \begin{cases}
        V_i(x_i) &\text{if } i \notin I_c; \\
        e^{-L_i \tau_i} V_i(x_i) &\text{if } i \in I_c
    \end{cases}
\end{equation}
is a candidate exponential ISS Lyapunov function w.r.t. 
\begin{equation*}
    \tilde\A_i :=
    \begin{cases}
        \A_i &\text{if } i \notin I_c; \\
        \A_i \times [0, N_{0i}^* \delta_i^*] &\text{if } i \in I_c
    \end{cases}
\end{equation*}
for the augmented subsystem $ \tilde\Sigma_i $ of \eqref{eq:aug-inter} with rate coefficients
\begin{equation}\label{eq:aug-inter-radt-sub-lya-rate-coef}
    \begin{cases}
        \tilde c_i := c_i,\, \tilde d_i := d_i &\text{if } i \notin I_c; \\
        \tilde c_i := c_i + L_i,\, \tilde d_i := d_i - L_i \delta_i^* &\text{if } i \in I_c.
    \end{cases}
\end{equation}
More specifically,
\begin{enumerate}[1.]
    \item there exist functions $ \tilde\psi_{i1}, \tilde\psi_{i2} \in \Kinf $ so that \eqref{eq:aug-inter-adt-sub-lya-bnd} holds;
    \item there exist internal gains $ \tilde\chi_{ij} \in \K,\, j \neq i $ defined by\footnote{Note that in \eqref{eq:aug-inter-adt-sub-lya-gain-fcn-int}, the forms of the internal gains $ \tilde\chi_{ij} $ depend on whether $ i \in I_d $, while in \eqref{eq:aug-inter-radt-sub-lya-gain-fcn-int}, the forms of $ \tilde\chi_{ij} $ depend on whether $ j \in I_c $.\label{ftnt:aug-int-gain}}
        \begin{equation}\label{eq:aug-inter-radt-sub-lya-gain-fcn-int}
            \tilde\chi_{ij}(r) :=
            \begin{cases}
                \chi_{ij}(r) &\text{for } j \notin I_c; \\
                \chi_{ij}(e^{L_j N_{0j}^* \delta_j^*} r) &\text{for } j \in I_c
            \end{cases}
        \end{equation}
        with $ \chi_{ij} $ as in \eqref{eq:inter-sub-lya-gain} and $ \tilde\chi_{ii} \equiv 0 $, and an external gain $ \tilde\chi_i \in \K $ such that for all $ (z, u) \in \tilde\cC $ with $ z_i \notin \tilde\A_i $, \eqref{eq:aug-inter-adt-sub-lya-gain} implies \eqref{eq:aug-inter-adt-sub-lya-flow};
    \item for all $ (z, u) \in \tilde\cD $, \eqref{eq:aug-inter-adt-sub-lya-jump} holds.
\end{enumerate}
\end{Aussage}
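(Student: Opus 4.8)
The plan is to follow the proof of Proposition~\ref{prop:aug-inter-adt-sub-lya} almost verbatim, tracking the sign flips that come from using an RADT clock (which flows up at unit rate and is \emph{reset downward} at jumps) in place of an ADT clock. For $ i \notin I_c $ there is nothing to do, since $ W_i = V_i $, $ \tilde\A_i = \A_i $ and $ (\tilde c_i, \tilde d_i) = (c_i, d_i) $; so I would fix $ i \in I_c $. Local Lipschitzness of $ W_i $ outside $ \tilde\A_i $ follows from that of $ V_i $ outside $ \A_i $ together with smoothness of $ \tau_i \mapsto e^{-L_i \tau_i} $. For the sandwich bound I would use $ \tau_i \in [0, N_{0i}^* \delta_i^*] $ and $ L_i \geq 0 $ to get $ e^{-L_i N_{0i}^* \delta_i^*} V_i(x_i) \leq W_i(z_i) \leq V_i(x_i) $, together with $ |z_i|_{\tilde\A_i} = |x_i|_{\A_i} $ (the clock component of $ \tilde\A_i $ is its whole range), so that \eqref{eq:aug-inter-adt-sub-lya-bnd} holds with $ \tilde\psi_{i1} := e^{-L_i N_{0i}^* \delta_i^*} \psi_{i1} $ and $ \tilde\psi_{i2} := \psi_{i2} $.

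For the flow estimate I would take $ \tilde\chi_i := \chi_i $ and $ \tilde\chi_{ij} $ as in \eqref{eq:aug-inter-radt-sub-lya-gain-fcn-int}, and first check that the hypothesis \eqref{eq:aug-inter-adt-sub-lya-gain} forces \eqref{eq:inter-sub-lya-gain} for $ \Sigma_i $: indeed $ V_i(x_i) = e^{L_i \tau_i} W_i(z_i) \geq W_i(z_i) $, while for every $ j $ one has $ \chi_{ij}(V_j(x_j)) \leq \tilde\chi_{ij}(W_j(z_j)) $ --- trivially if $ j \notin I_c $, and using $ V_j(x_j) = e^{L_j \tau_j} W_j(z_j) \leq e^{L_j N_{0j}^* \delta_j^*} W_j(z_j) $ if $ j \in I_c $ --- and similarly $ \chi_i(|u|) = \tilde\chi_i(|u|) \leq W_i(z_i) \leq V_i(x_i) $. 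Then \eqref{eq:inter-sub-lya-flow}--\eqref{eq:inter-sub-lay-rate-coef} give $ \dot V_i(x_i; y_{i1}) \leq -c_i V_i(x_i) $ for $ y_{i1} \in F_i(x, u) $, and since any $ y_i \in \tilde F_i(z, u) $ has the form $ (y_{i1}, 1) $, the product rule yields $ \dot W_i(z_i; y_i) = e^{-L_i \tau_i}\big( \dot V_i(x_i; y_{i1}) - L_i V_i(x_i) \big) \leq -(c_i + L_i) W_i(z_i) = -\tilde c_i W_i(z_i) $, i.e. \eqref{eq:aug-inter-adt-sub-lya-flow}.

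For the jump estimate I would take $ y_i = (y_{i1}, y_{i2}) \in \tilde G_i(z, u) $ with $ y_{i1} \in G_i(x, u) $ and $ y_{i2} = \max\{0, \tau_i - \delta_i^*\} $, substitute \eqref{eq:inter-sub-lya-jump} (with $ \alpha_i(r) = e^{-d_i} r $) into $ W_i(y_i) = e^{-L_i y_{i2}} V_i(y_{i1}) $, and distribute the factor $ e^{-L_i y_{i2}} $ over the maximum. This is the one point needing care, and essentially the only place where the RADT reset really enters: for the $ \alpha_i $-term I would use $ y_{i2} \geq \tau_i - \delta_i^* $ to get $ e^{-L_i y_{i2}} e^{-d_i} V_i(x_i) \leq e^{-(d_i - L_i \delta_i^*)} e^{-L_i \tau_i} V_i(x_i) = e^{-\tilde d_i} W_i(z_i) $, whereas for the internal- and external-gain terms I would use only $ y_{i2} \geq 0 $ (so $ e^{-L_i y_{i2}} \leq 1 $) together with the bounds from the flow part, obtaining $ \tilde\chi_{ij}(W_j(z_j)) $ and $ \tilde\chi_i(|u|) $; this yields \eqref{eq:aug-inter-adt-sub-lya-jump}. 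One must also keep in mind that, in contrast to Proposition~\ref{prop:aug-inter-adt-sub-lya}, the clock absorbed into $ \tilde\chi_{ij} $ here is $ \tau_j $ rather than $ \tau_i $ (cf. footnote~\ref{ftnt:aug-int-gain}), which is why the exponential factor in \eqref{eq:aug-inter-radt-sub-lya-gain-fcn-int} depends on $ N_{0j}^* \delta_j^* $. Combining the three items shows that $ W_i $ is a candidate exponential ISS Lyapunov function w.r.t. $ \tilde\A_i $ for $ \tilde\Sigma_i $ with rate coefficients $ (\tilde c_i, \tilde d_i) $ as in \eqref{eq:aug-inter-radt-sub-lya-rate-coef}. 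Overall I expect the argument to be routine given Proposition~\ref{prop:aug-inter-adt-sub-lya}; the only genuinely new bookkeeping is the splitting in the jump estimate just described.
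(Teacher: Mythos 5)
Your proposal is correct and follows essentially the same route as the paper's proof: the same choices $\tilde\psi_{i1} = e^{-L_i N_{0i}^*\delta_i^*}\psi_{i1}$, $\tilde\psi_{i2} = \psi_{i2}$, $\tilde\chi_i = \chi_i$, the same verification that \eqref{eq:aug-inter-adt-sub-lya-gain} implies \eqref{eq:inter-sub-lya-gain} via $V_j(x_j) \leq e^{L_j N_{0j}^*\delta_j^*} W_j(z_j)$, the same flow computation with $y_{i2}=1$, and the same jump-estimate splitting using $y_{i2} \geq \tau_i - \delta_i^*$ for the $\alpha_i$-term and $e^{-L_i y_{i2}} \leq 1$ for the gain terms. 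No gaps; nothing further is needed.
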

\begin{proof}
If $ i \notin I_c $, then the claim follows directly from the assumption that $ V_i $ is a candidate exponential ISS Lyapunov function with rate coefficients $ c_i, d_i $. Therefore, we only consider the case $ i \in I_c $ in the following proof. As $ V_i $ is locally Lipschitz outside $ \A_i $ and the map $ \tau_i \mapsto e^{-L_i \tau_i} $ is smooth, $ W_i $ is locally Lipschitz outside $ \tilde\A_i $.

First, consider the functions $ \tilde\psi_{i1}, \tilde\psi_{i2} \in \Kinf $ defined by
$$ \tilde\psi_{i1}(r) := e^{-L_i N_{0i}^* \delta_i^*} \psi_{i1}(r), \quad \tilde\psi_{i2}(r) := \psi_{i2}(r) $$
with $ \psi_{i1}, \psi_{i2} $ as in \eqref{eq:inter-sub-lya-bnd}. Then \eqref{eq:aug-inter-adt-sub-lya-bnd} follows from \eqref{eq:inter-sub-lya-bnd}.

Second, consider the function $ \tilde\chi_i \in \K $ defined by
\begin{equation}\label{eq:aug-inter-radt-sub-lya-gain-fcn-ext}
    \tilde\chi_i(r) := \chi_i(r)
\end{equation}
with $ \chi_i $ as in \eqref{eq:inter-sub-lya-gain}. For each $ (z, u) \in \tilde\cC $ with $ z_i \notin \tilde\A_i $, if \eqref{eq:aug-inter-adt-sub-lya-gain} holds, then
\begin{align*}
    &V_i(x_i) = e^{L_i \tau_i} W_i(z_i) \geq W_i(z_i) \geq \max_{j=1}^{n} \tilde\chi_{ij}(W_j(z_j)) \\
    &= \max_{j=1}^{n} \chi_{ij}(e^{L_j N_{0j}^* \delta_j^*} W_j(z_j)) \geq \max_{j=1}^{n} \chi_{ij}(V_j(x_j)),
\end{align*}
and $ V_i(x_i) = e^{L_i \tau_i} W_i(z_i) \geq W_i(z_i) \geq \tilde\chi_i(|u|) \geq \chi_i(|u|) $. Hence \eqref{eq:inter-sub-lya-gain}, and therefore \eqref{eq:inter-sub-lya-flow}, holds. For all $ y_i \in \tilde F_i(z, u) $, let $ y_i = (y_{i1}, y_{i2}) $ be such that $ y_{i1} \in F_i(x, u) $ and $ y_{i2} = 1 $. Following \eqref{eq:inter-sub-lya-flow}, \eqref{eq:inter-sub-lay-rate-coef}, and \eqref{eq:aug-inter-radt-sub-lya-rate-coef},
\begin{multline*}
    \dot W_i(z_i; y_i) = e^{-L_i \tau_i} \dot V_i(x_i; y_{i1}) - L_i e^{-L_i \tau_i} V_i(x_i)\, y_{i2} \\
    \leq -c_i e^{-L_i \tau_i} V_i (x_i) - L_i e^{-L_i \tau_i} V_i(x_i) = -\tilde c_i W_i(z_i).
\end{multline*}
Finally, consider an arbitrary $ (z, u) \in \tilde\cD $. For all $ y_i \in \tilde G_i(z, u) $, let $ y_i = (y_{i1}, y_{i2}) $ be such that $ y_{i1} \in G_i(x, u) $ and $ y_{i2} = \max\{0,\, \tau_i - \delta_i^*\} $. From \eqref{eq:aug-inter-radt-sub-lya-rate-coef}, it follows that
$$ e^{-\tilde d_i} W_i(z_i) = e^{-d_i + L_i \delta_i^* - L_i \tau_i} V_i(x_i) \geq e^{-L_i y_{i2} - d_i} V_i(x_i), $$
and from \eqref{eq:aug-inter-radt-sub-lya-gain-fcn-int} and \eqref{eq:aug-inter-radt-sub-lya-gain-fcn-ext}, it follows that $ \tilde\chi_{ij}(W_j(z_j)) = \chi_{ij}(e^{L_j N_{0j}^* \delta_j^*} W_j(z_j)) \geq e^{-L_i y_{i2}} \chi_{ij}(V_j(x_j)) $ for all $ j $, and $ \tilde\chi_i(|u|) = \chi_i(|u|) \geq e^{-L_i y_{i2}} \chi_i(|u|) $, respectively. Substituting the previous equations into \eqref{eq:inter-sub-lya-jump} gives \eqref{eq:aug-inter-adt-sub-lya-jump}.

Therefore, $ W_i $ is a candidate exponential ISS Lyapunov function w.r.t. $ \tilde\A_i $ for the augmented subsystem $ \tilde\Sigma_i $ of \eqref{eq:aug-inter} with the rate coefficients $ \tilde c_i, \tilde d_i $ defined by \eqref{eq:aug-inter-radt-sub-lya-rate-coef}.
\end{proof}

\subsection{Example}\label{ssec:eg}
We demonstrate the approach of modifying ISS Lyapunov functions in a case where we cannot apply Theorem~\ref{thm:inter-lya} and Proposition~\ref{prop:hyd-iss-partial} to establish stability directly.

Consider an interconnection of two hybrid subsystems with the state $ x = (x_1, x_2) $ modeled by
\begin{equation*}
\begin{aligned}
    &\dot x_1 = x_1 + x_2^2,\quad \dot x_2 = -3 x_2 + 0.1 \sqrt{|x_1|}, &\qquad &x \in \cC, \\
    &x_1^+ = e^{-1} x_1,\quad x_2^+ = e x_2, &\qquad &x \in \cD,
\end{aligned}
\end{equation*}
where $ \cC = \cD = \R^2 $. It can be represented in the form of the general interconnection \eqref{eq:inter} without the external input $ u $ by letting $ n = 2 $, $ F_1(x) = x_1 + x_2^2 $, $ F_2(x) = -3 x_2 + 0.1 \sqrt{|x_1|} $, $ G_1(x) = e^{-1} x_1 $, and $ G_2(x) = e x_2 $. As $ \cC = \cD = \R^2 $, the system may flow or jump at any point in $ \R^2 $, and all solutions are complete. Hence the notions of pre-ISS and ISS coincide, and so do the notions of pre-GAS and GAS. The $ x_1 $-subsystem $ \Sigma_1 $ has stabilizing discrete dynamics but non-ISS continuous dynamics, while the $ x_2 $-subsystem $ \Sigma_2 $ has ISS continuous dynamics but destabilizing discrete dynamics. Thus we cannot apply Theorem~\ref{thm:inter-lya} and Proposition~\ref{prop:hyd-iss-partial} to establish pre-GAS of the interconnection directly.

Consider the functions $ V_1, V_2: \R \to \R_+ $ defined by
\begin{equation*}
    V_1(x_1) := |x_1|, \quad V_2(x_2) := |x_2|,
\end{equation*}
and the functions $ \chi_{12}, \chi_{21}: \R_+ \to \R_+ $ defined by
\begin{equation*}
    \chi_{12}(r) := r^2/a, \quad \chi_{21}(r) := \sqrt{r}/b
\end{equation*}
with some scalars $ a, b > 0 $. From
\begin{equation*}
\begin{aligned}
    V_1(x_1) \geq \chi_{12}(V_2(x_2)) &\implies \dot V_1(x_1) \leq (a + 1) V_1(x_1), \\
    V_2(x_2) \geq \chi_{21}(V_1(x_1)) &\implies \dot V_2(x_2) \leq (0.1b - 3) V_2(x_2),
\end{aligned}
\end{equation*}
and\footnote{Note that the discrete dynamics of both subsystems are autonomous, and hence we can ignore the terms corresponding to internal gains $ \chi_{12}, \chi_{21} $ in \eqref{eq:hyd-lya-alt-jump}. Similar simplifications will be made when we apply Proposition~\ref{prop:aug-inter-adt-sub-lya} and Theorem~\ref{thm:inter-lya}.\label{ftnt:eg}}
\begin{equation*}
    V_1(x_1^+) \leq e^{-1} V_1(x_1), \quad V_2(x_2^+) \leq e V_2(x_2)
\end{equation*}
for all $ x = (x_1, x_2) \in \R^2 $, it follows that $ V_1 $ and $ V_2 $ are candidate exponential ISS Lyapunov functions w.r.t. $ \{0\} $ for the subsystems $ \Sigma_1 $ and $ \Sigma_2 $ with the internal gains $ \chi_{12} $ and $ \chi_{21} $, respectively. Since the discrete dynamics of the $ \Sigma_2 $ is destabilizing, we invoke the modification scheme from Section~\ref{ssec:aug-adt}. Consider a solution $ x: \dom x \to \R^2 $ admitting an ADT $ \delta_2 > 0 $, that is, there exists an integer $ N_{02} \geq 1 $ such that all $ (s, k) \preceq (t, j) $ in $ \dom x $ satisfy
\begin{equation}\label{eq:eg-adt}
    j - k \leq \delta_2 (t - s) + N_{02}.
\end{equation}
The corresponding ADT clock $ \tau_2 $ is defined by
\begin{equation*}
\begin{aligned}
    &\dot\tau_2 \in [0, \delta_2], &\qquad &\tau_2 \in [0, N_{02}], \\
    &\tau_2^+ = \tau_2 - 1, &\qquad &\tau_2 \in [1, N_{02}].
\end{aligned}
\end{equation*}
Let $ z_1 := x_1 $ and $ z_2 := (x_2, \tau_2) $. Following Proposition~\ref{prop:aug-inter-adt-sub-lya}, the function $ W_2: \R \times [0, N_{02}] \to \R_+ $ defined by
\begin{equation*}
    W_2(z_2) := e^{L_2 \tau_2} V_2(x_2)
\end{equation*}
is a candidate exponential ISS Lyapunov function w.r.t. $ \tilde\A_2 := \{0\} \times [0, N_{02}] $ for the augmented subsystem $ \tilde\Sigma_2 $ with the internal gain $ \tilde\chi_{21} \in \K $ defined by
\begin{equation*}
    \tilde\chi_{21}(r) := e^{L_2 N_{02}} \chi_{21}(r) = e^{L_2 N_{02}} \sqrt{r}/b.
\end{equation*}
More specifically, for all $ (z_1, z_2) \in \R^2 \times [0, N_{02}] $, if
\begin{equation*}
    W_2(z_2) \geq \tilde\chi_{21}(V_1(z_1))
\end{equation*}
then
\begin{equation*}
\begin{aligned}
    \dot W_2(z_2; y_2) &= e^{L_2 \tau_2} \dot V_2(x_2) + L_2 e^{L_2 \tau_2} V_2(x_2) \dot\tau_2 \\
    &\leq (0.1b - 3) e^{L_2 \tau_2} V_2(x_2) + L_2 \delta_2 e^{L_2 \tau_2} V_2(x_2) \\
    &= (0.1b - 3 + L_2 \delta_2) W_2(z_2)
\end{aligned}
\end{equation*}
for all $ y_2 \in \{-3 x_2 + 0.1 \sqrt{|x_1|}\} \times [0, \delta_2] $. Furthermore,
\begin{equation*}
    W_2(e x_2, \tau_2 - 1) = e^{L_2 (\tau_2 - 1) + 1} V_2(x_2) \leq e^{1 - L_2} W_2(z_2)
\end{equation*}
(see also footnote~\ref{ftnt:eg}). To make the discrete dynamics of $ \tilde\Sigma_2 $ ISS, we set
\begin{equation}\label{eq:eg-const-bnd-lower}
    L_2 > 1.
\end{equation}
Following \eqref{eq:gain-operator}, the gain operator $ \tilde\Gamma: \R_+^2 \to \R_+^2 $ after modification is defined by
\begin{equation*}
    \tilde\Gamma(r_1, r_2) = (\chi_{12}(r_2), \tilde\chi_{21}(r_1));
\end{equation*}
thus the small-gain condition \eqref{eq:smg-dfn} holds for $ \tilde\Gamma $ iff
$\chi_{12}(\tilde\chi_{21}(r)) < r$ for all $r > 0$, 
or equivalently,
\begin{equation}\label{eq:eg-const-bnd-upper}
    L_2 < \dfrac{\ln(a b^2)}{2 N_{02}}.
\end{equation}
Let a scalar $ s > 0 $ be such that
$ e^{L_2 N_{02}}/b < 1/s < \sqrt{a}. $
Then $ \sigma := (\sigma_1, \sigma_2) $ with $ \sigma_1(r) := r, \ \ \sigma_2(r) := \tfrac{\sqrt{r}}{s} $
is an $ \Omega $-path w.r.t. the gain operator $ \tilde\Gamma $. Following Theorem~\ref{thm:inter-lya}, the function $ W: \R^2 \times [0, N_{02}] \to \R_+ $ defined by
{
\setlength{\abovedisplayskip}{5pt}
\setlength{\belowdisplayskip}{5pt}
\begin{equation*}
\begin{split}
    W(z) &:= \max\{\sigma_1^{-1}(V_1(z_1)),\, \sigma_2^{-1}(W_2(z_2))\} \\
    &\; = \max\{V_1(z_1),\, s^2 W_2(z_2)^2\}
\end{split}
\end{equation*}
}
is a candidate Lyapunov function w.r.t. $ \tilde\A := \{(0, 0)\} \times [0, N_{02}] $ for the augmented interconnection with state $ z := (z_1, z_2) \in \R^2 \times [0, N_{02}] =: \cZ $. More specifically, for all $ z \in \cZ $,
$$ \dot W(z; y) \leq -c W(z) $$
for all $ y \in \{-x_1 + x_2^2\} \times \{-3 x_2 + 0.1 \sqrt{|x_1|}\} \times [0, \delta_2] $ with
\begin{align*}
c := \min\{-(a + 1),\, 2(3 - 0.1b - L_2 \delta_2)\} < 0,
\end{align*}
where the inequality follows from $ a > 0 $. Furthermore,
{
\setlength{\abovedisplayskip}{5pt}
\setlength{\belowdisplayskip}{5pt}
$$ W(e^{-1} x_1, e x_2, \tau_2 - 1) \leq e^{-d} W(z) $$
}
with $d := \min\{1,\, 2 (L_2 - 1)\} > 0, $
which follows from \eqref{eq:eg-const-bnd-lower}.
Thus $ W $ is a candidate exponential Lyapunov function for the augmented interconnection with rate coefficients $ c, d $. Consider the set of solutions $ x: \dom x \to \R^2 $ admitting the ADT $ \delta_2 $ and also an RADT $ \delta^* > 0 $, that is, in addition to \eqref{eq:eg-adt}, there also exists an integer $ N_0^* \geq 1 $ such that all $ (s, l) \preceq (t, j) $ in $ \dom x $ satisfy
\begin{equation}\label{eq:eg-radt}
    t - s \leq \delta^* (j - k) + \delta^* N_0^*.
\end{equation}
Following Proposition~\ref{prop:hyd-iss-partial} and Remark~\ref{rmk:hyd-iss-partial-adt}, this set of solutions is GAS provided that
\begin{equation*}
    0 < \delta^* < \dfrac{d}{-c} = \dfrac{\min\{1,\, 2 (L_2 - 1)\}}{\max\{a + 1,\, 2(0.1b - 3 + L_2 \delta_2)\}}
\end{equation*}
and \eqref{eq:eg-const-bnd-upper} hold. For example, if $ a = 1 $, $ b = 5 $, and $ L_2 = 1.5 $, then the set of solutions satisfying the ADT condition \eqref{eq:eg-adt} with $ \delta_2 = 2.25 $ and $ N_{02} = 1 $, and also the RADT condition \eqref{eq:eg-radt} with $ \delta^* = 0.45 $ and $ N_0^* = 1 $ is GAS.

\section{Conclusion and future research}\label{sec:sum}
We have proved several small-gain theorems for interconnections of hybrid subsystems which yield candidate ISS Lyapunov functions for the interconnections. These results unify several Lyapunov-based small-gain theorems for hybrid systems \cite{NesicTeel2008,DashkovskiyKosmykov2013,LiberzonNesicTeel2014} and impulsive systems \cite{DashkovskiyKosmykovMironchenkoNaujok2012,DashkovskiyMironchenko2013SICON}, and pave the way to the following general scheme for establishing ISS of interconnections of hybrid subsystems:
\begin{enumerate}[1.]
    \item Construct a candidate exponential ISS Lyapunov function $ V_i $ for each subsystem $ \Sigma_i $ with rate coefficients $ c_i, d_i $ and linear internal gains.
    \item Compute the index sets $ I_d, I_c $ of non-ISS dynamics.
    \item Modify the candidate exponential ISS Lyapunov functions $ V_i $ \emph{either for all $ i \in I_d $} via Proposition~\ref{prop:aug-inter-adt-sub-lya} \emph{or for all $ i \in I_c $} via Proposition~\ref{prop:aug-inter-radt-sub-lya}.
    \item Invoke Theorem~\ref{thm:inter-lya-lin} to construct a candidate exponential ISS Lyapunov function $ W $ for the augmented interconnection $ \tilde\Sigma $ with rate coefficients $ c, d $.
    \item Derive the conditions for ISS of $ \tilde\Sigma $ via Proposition~\ref{prop:hyd-iss-partial}.
    \item Summarize the conditions for ISS of the original interconnection $ \Sigma $ from those in Steps~3 and~5.
\end{enumerate}
As we observed in Section~\ref{sec:aug}, the modification of candidate ISS Lyapunov functions in Step~3 leads to enlarged internal gains. Therefore, a considerable improvement of this scheme above lies in the fact that only the candidate ISS Lyapunov functions with indices from $ I_d $ or those with indices from $ I_c $ would be modified, instead of all those with indices from $ I_d \cup I_c $ as it was done in \cite{LiberzonNesicTeel2014}. If either $ I_d = \emptyset $ or $ I_c = \emptyset $, then no subsystem needs to be modified at all. Moreover, this scheme also applies to arbitrary interconnections composed of $ n \geq 2 $ subsystems.

In the scheme above, it is assumed that all $ V_i $ are candidate exponential ISS Lyapunov functions with linear internal gains. However, the modification also works for candidate exponential Lyapunov functions with \emph{nonlinear} internal gains, and Theorem~\ref{thm:inter-lya} was proved for arbitrary candidate ISS Lyapunov functions with nonlinear internal gains. If Proposition~\ref{prop:hyd-iss-partial} were extended to the case of non-exponential ISS Lyapunov functions, one could apply the scheme above for $ V_i $ with nonlinear internal gains as well. Such theorems have been proved in \cite[Theorems~1 and~3]{DashkovskiyMironchenko2013SICON} for impulsive systems, and we believe that they can be generalized to hybrid systems as well. This is one of the possible directions for future research.

The more challenging questions are whether one can establish ISS of an interconnection in the presence of destabilizing dynamics in subsystems without enlarging the internal gains, or without modifying ISS Lyapunov functions at all. At the time these questions remain open.

\begin{ack}
The work of A. Mironchenko was supported by the German Research Foundation (DFG) grant Wi 1458/13-1. The work of G. Yang and D. Liberzon was supported by the NSF grants CNS-1217811 and ECCS-1231196.

The authors thank Navid Noroozi for his comments on the proof of the main result. The authors are also grateful to the anonymous reviewers for their careful evaluation of the paper and valuable suggestions.
\end{ack}

\bibliographystyle{abbrv} 
\bibliography{reference} 
\end{document}